\DeclareMathOperator{\id}{id}
\DeclareMathOperator{\el}{el}
\DeclareMathOperator{\op}{op}
\DeclareMathOperator{\Lan}{Lan}
\DeclareMathOperator{\Rep}{Rep}
\DeclareMathOperator{\Vect}{\mathbf{Vect}}
\DeclareMathOperator{\fd}{fd}
\DeclareMathOperator{\Hom}{Hom}
\DeclareMathOperator{\Sh}{Sh}
\DeclareMathOperator{\Ind}{Ind}
\DeclareMathOperator{\lax}{lax}
\DeclareMathOperator{\colim}{colim}
\DeclareMathOperator{\Chb}{\mathrm{Ch}^{\mathrm{b}}}
\DeclareMathOperator{\Kb}{\mathrm{K}^{\mathrm{b}}}
\DeclareMathOperator{\Lex}{\mathbf{Lex}}
\DeclareMathOperator{\Ab}{\mathbf{Ab}}
\DeclareMathOperator{\Fun}{\mathrm{Fun}}
\newcommand{\ca}[1]{\mathscr{#1}}
\newcommand{\sh}{\mathrm{Sh}}
\DeclareMathOperator{\U}{\mathbb{U}}
\newcommand{\ten}[1]{\mathop{{\otimes}_{#1}}}
\newcommand{\defl}{\mathrel{\mathop:}=}
\theoremstyle{plain}
\newtheorem{thm}{Theorem}[subsection]
\newtheorem*{thm*}{Theorem}
\newtheorem{prop}[thm]{Proposition}
\newtheorem{cor}[thm]{Corollary}
\theoremstyle{definition}
\newtheorem{rmk}[thm]{Remark}
\newtheorem{dfn}[thm]{Definition}
\newtheoremstyle{citing}{}{}{\itshape}{}{\bfseries}{.}{ }{\thmnote{#3}}
\theoremstyle{citing}
\newtheoremstyle{citingdfn}{}{}{}{}{\bfseries}{.}{ }{\thmnote{#3}}
\theoremstyle{citingdfn}
\numberwithin{equation}{section}
\keywords{Tannakian categories, motives}
\subjclass[2010]{14C15,  	18E30 }
\author{Daniel Sch{\"a}ppi}
\thanks{This research was supported by the DFG grant: SFB 1085 ``Higher invariants.''}
\address{Fakult{\"a}t f{\"u}r Mathematik,
Universit{\"a}t Regensburg,
93040 Regensburg,
Germany}
\email{daniel.schaeppi@ur.de}
\title{Graded-Tannakian categories of motives}
\begin{document}

\begin{abstract}
 
 Given a rigid tensor-triangulated category and a vector space valued homological functor for which the K{\"u}nneth isomorphism holds, we construct a universal graded-Tannakian category through which the given homological functor factors. We use this to (unconditionally) construct graded-Tannakian categories of pure motives associated to a fixed Weil cohomology theory, with a fiber functor realizing the given cohomology theory. For $\ell$-adic cohomology and a ground field which is algebraic over a finite field, this category is Tannakian. In this case, we obtain in particular motivic Galois groups which act naturally on $\ell$-adic cohomology without assuming any of the standard conjectures. We show that these graded-Tannakian categories are equivalent to Grothendieck's category of pure motives if the standard conjecture D holds.
 
\end{abstract}

\maketitle

\tableofcontents

\section{Introduction}

If the standard conjectures hold, then Grothendieck's category of motives is a Tannakian category with a universal property: every Weil cohomology theory arises via a fiber functor from the universal Weil cohomology theory with values in motives. Beilinson and Grothendieck also conjectured that there should exist a Tannakian category of mixed motives (which would include motives of singular varieties). There are by now various triangulated categories of mixed motives over a field (due to Hanamura, Levine, Voevodsky, later extended to more general base schemes by various authors) which are conjecturally equivalent to the derived category of the Tannakian category of mixed motives. On the other hand, there are also several abelian categories of pure motives due to Deligne and Milne (in characteristic zero, using absolute Hodge cycles) \cite{DELIGNE_MILNE} and Andr{\'e}  (using motivated cycles) \cite{ANDRE}. Nori constructed an abelian category of mixed motives in characteristic zero.

 In this article, we give a systematic construction to move from triangulated categories (of motives) to abelian categories. More precisely, we can ask the following question. Given a triangulated category $\ca{T}$ and a homological functor $H$ from $\ca{T}$ to graded vector spaces, does the homological functor factor through a universal abelian category? In \S \ref{section:triangulated}, we give an affirmative answer to this question if $\ca{T}$ is a rigid tensor-triangulated category and the homological functor $H$ is symmetric strong monoidal (that is, the K{\"u}nneth isomorphism holds): there exists a rigid symmetric monoidal abelian category $\ca{M}_H$, a symmetric strong monoidal functor $[-] \colon \ca{T} \rightarrow \ca{M}_H$, and a faithful exact functor $\bar{H}$ to graded vector spaces such that $H\cong \bar{H}[-]$. We use the name \emph{graded-Tannakian} for categories such as $\ca{M}_{H}$ and \emph{graded fiber functor} for $\bar{H}$ (in accordance with the usage of the term \emph{graded-commutative} in algebraic topology). In characteristic zero, Deligne has given an intrinsic characterization of such categories in terms of the behaviour of Schur functors \cite{DELIGNE_TENSORIELLE}.

 The categroy $\ca{M}_H$ has the following universal property: any other symmetric strong monoidal homological functor $H^{\prime}$ on $\ca{T}$ which has the same strength as $H$ (meaning $Hf=0$ if and only if $H^{\prime}f=0$) arises essentially uniquely from a graded fiber functor on $\ca{M}_H$. The condition that $H$ and $H^{\prime}$ have the same strength is clearly necessary since fiber functors are faithful. The construction gives in particular candidates for graded-Tannakian categories of mixed motives, starting with various triangulated categories of mixed motives and their realization functors. For example, each mixed Weil cohomology theory (on smooth schemes over a perfect field) in the sense of Cisinski and D\'{e}glise \cite{CISINSKI_DEGLISE} has a naturally associated graded-Tannakian category (see Corollary~\ref{cor:mixed_weil}).
 
 If there exists a motivic $t$-structure with very good properties with respect to a given realization $H$, we get a comparison functor $\ca{M}_H \rightarrow \ca{M}$ directly from the universal property. In Corollary~\ref{cor:derived} we give further evidence that the categories $\ca{M}_H$ should provide good candidates for a Tannakian category of mixed motives. We leave a detailed study of these examples to future work and focus attention on categories of pure motives instead.

In \S \ref{section:applications}, we apply the general construction in the case where the triangulated category $\ca{T}$ is the homotopy category of bounded complexes of Chow motives and investigate some of the properties of the resulting graded-Tannakian category.

 Let $k$ be a field, $K$ a field of characteristic zero and $H$ a Weil cohomology theory with values in $K$-vector spaces. We write $\mathrm{Mot}_H(k)$ for the category of cohomological Chow motives modulo homological equivalence (see \cite{MANIN}, \cite[\S 4]{DEMAZURE}). From this data, we construct a graded-Tannakian category $\ca{M}_H(k)$ with a graded fiber functor $\bar{H}$ and a  symmetric strong monoidal functor $[-]\colon \mathrm{Mot}_H(k) \rightarrow \ca{M}_H(k)$ such that $H\cong \bar{H}[-]$. Recall that the standard conjecture $D$ for $H$ asserts that $H$-equivalence coincides with numerical equivalence. In \S \ref{section:applications}, we prove the following theorem (see Theorem~\ref{thm:properties} for precise statements).

 \begin{thm*}
 The category $\ca{M}_{H}(k)$ and the functors $[-]$, $\bar{H}$ have the following properties.
 \begin{enumerate}
 \item[(i)]  If the standard conjecture $D$ holds for $H$, then $[-] \colon \mathrm{Mot_H}(k) \rightarrow \ca{M}_H(k)$ is an equivalence.
  \item[(ii)] If $H^{\prime}$ is another Weil cohomology theory taking values in $K^{\prime}$-vector spaces and there is a common field extension of $K$ and $K^{\prime}$ over which $H$ and $H^{\prime}$ are isomorphic, then there exists a graded fiber functor $\bar{H^{\prime}}$ from $\ca{M}_H(k)$ to $K^{\prime}$-vector spaces such that $H^{\prime} \cong \bar{H^{\prime}}[-]$.
 \item[(iii)] If $k$ has characteristic zero, $H$ is classical (de Rham, $\ell$-adic {\'e}tale, or Betti cohomology for $k \subseteq \mathbb{C}$), and $\ca{M}_k$ is Andr{\'e}'s Tannakian category defined via motivated cycles \cite[\S 4]{ANDRE}, then there exists a (non symmetric) strong monoidal functor
  \[
  \ca{M}_H(k) \rightarrow \ca{M}_k
  \]
  which is faithful and exact and commutes with the respective realizations up to (non-symmetric) monoidal isomorphism.
\end{enumerate}   
 \end{thm*}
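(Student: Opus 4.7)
All three parts of the theorem should follow from the universal property of $\ca{M}_H(k)$ established in Section~\ref{section:triangulated}, combined with classical structural results on $\mathrm{Mot}_H(k)$.

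For (i), the plan is to invoke Jannsen's theorem: under conjecture $D$, the category $\mathrm{Mot}_H(k)$ is semisimple abelian, and since under $D$ the K\"unneth projectors are algebraic, we also obtain an internal weight grading $h(X)\cong\bigoplus_i h^i(X)$. Combining this with rigidity (inherited from Chow motives) and with $H$ restricting to a faithful exact graded fiber functor, $\mathrm{Mot}_H(k)$ becomes graded-Tannakian in its own right. I would then construct a symmetric strong monoidal homological functor from $\Kb$ of Chow motives to $\mathrm{Mot}_H(k)$ by first quotienting by $H$-equivalence and then applying the K\"unneth-weighted totalization $C^{\bullet}\mapsto\bigoplus_i H_i(C^{\bullet})[-i]$, which factors $H$. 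That this has the same strength as $H$ reduces to the faithfulness of $H$ on $\mathrm{Mot}_H(k)$, which holds by definition of homological equivalence. The universal property then furnishes a quasi-inverse to $[-]$.

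For (ii), the plan is a direct appeal to the universal property: I need only check that $H^{\prime}$ and $H$ have the same strength when regarded as symmetric strong monoidal homological functors on $\Kb$ of Chow motives. Given a morphism $f$, the hypothesis provides a common extension $L$ of $K$ and $K^{\prime}$ with $H\otimes_K L\cong H^{\prime}\otimes_{K^{\prime}}L$; scalar extension to $L$ is faithful on morphism spaces, so $Hf=0\iff (H\otimes_K L)f=0\iff(H^{\prime}\otimes_{K^{\prime}}L)f=0\iff H^{\prime}f=0$. The universal property immediately produces a graded fiber functor $\bar{H^{\prime}}$ with $\bar{H^{\prime}}\circ[-]\cong H^{\prime}$.

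For (iii), the starting point is that in characteristic zero with classical $H$, motivated correspondences mod $H$-equivalence include the algebraic ones, giving a natural symmetric strong monoidal functor $\mathrm{Mot}_H(k)\to\ca{M}_k$ compatible with the respective realizations. Both categories are semisimple abelian with K\"unneth decompositions, so K\"unneth-weighted totalization lifts this to a symmetric strong monoidal homological functor from $\Kb$ of Chow motives into $\ca{M}_k$ equipped with the \emph{Koszul} sign convention on its commutativity constraint, factoring the graded realization and having the same strength as $H$. The universal property then yields a symmetric strong monoidal functor into $\ca{M}_k$ with Koszul signs. The key additional step---and what I expect to be the main obstacle---is the reconciliation of sign conventions: André's category $\ca{M}_k$ is Tannakian with a sign-modified commutativity constraint (so that odd cohomology carries a symmetry without Koszul signs), whereas $\ca{M}_H(k)$ is graded-Tannakian with Koszul signs built in. The identity on underlying objects between the Koszul-signed and André's sign-modified versions of $\ca{M}_k$ is strong monoidal but not symmetric, and composing produces the desired \emph{non-symmetric} strong monoidal functor $\ca{M}_H(k)\to\ca{M}_k$; faithfulness and exactness descend from semisimplicity and the faithful realizations on both sides.
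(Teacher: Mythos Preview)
Your approaches to (ii) and (iii) are essentially what the paper does. For (ii), the common extension forces $H$ and $H^{\prime}$ to define the same class of $H$-epimorphisms in $\Kb\bigl(\mathrm{Mot}_H(k)\bigr)$, hence $\ca{M}_H(k)=\ca{M}_{H^{\prime}}(k)$, and the second fiber functor drops out of the construction. For (iii), the paper also routes through the Koszul-signed version $\ca{M}^{\mathrm{tw}}_k$ of Andr\'e's category: there is a symmetric strong monoidal functor $\mathrm{Mot}_H(k)\to\ca{M}^{\mathrm{tw}}_k$ compatible with realizations, and since $\ca{M}^{\mathrm{tw}}_k$ is semi-simple with faithful realization, its own basic Tannakian factorization is an equivalence; one then composes with the non-symmetric identity $\ca{M}^{\mathrm{tw}}_k\to\ca{M}_k$. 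Your diagnosis of the sign twist as the sole obstruction to symmetry is exactly right.

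For (i), however, your route has a genuine gap. The assertion that ``under $D$ the K\"unneth projectors are algebraic'' is not justified and is not something the paper uses: Jannsen's theorem yields only that $\mathrm{Mot}_H(k)$ is semi-simple abelian, and it does not follow that the simple summands of $h(X)$ have realization concentrated in a single cohomological degree. Without an internal weight grading on $\mathrm{Mot}_H(k)$, your ``K\"unneth-weighted totalization'' $C^\bullet\mapsto\bigoplus_i H_i(C^\bullet)[-i]$ has no meaning as a functor landing in $\mathrm{Mot}_H(k)$ (there is no shift $[-i]$ available there), and the naive $\bigoplus_i H_i(C^\bullet)$ is not symmetric monoidal because it discards the Koszul signs. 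The paper's argument avoids all of this and is much shorter: if $\ca{A}=\mathrm{Mot}_H(k)$ is semi-simple and $H$ is faithful, then $\Kb(\ca{A})\simeq(\mathbb{Z},\varepsilon)\mbox{-}\mathrm{gr}(\ca{A})$ and the induced K\"unneth functor \emph{detects} epimorphisms, so every $H$-epimorphism in $\Kb(\ca{A})$ is already split. Hence every sequence in $\Sigma_H$ is split exact, the sheaf category coincides with the presheaf category, and $[-]$ is the Yoneda embedding. The image of the fully faithful $[-]\circ\mathrm{incl}_0$ is then already closed under direct sums, (co)kernels, tensor products, and duals, so $[-]\colon\mathrm{Mot}_H(k)\to\ca{M}_H(k)$ is an equivalence. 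No K\"unneth projectors and no quasi-inverse are needed.
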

 
 The reason for the incompatibility with symmetries is that it is unclear if every object in $\ca{M}_{H}(k)$ admits a direct sum decomposition with summands $M_i$ pure of weight $i$ (that is, such that $\bar{H}^iM_i$ is concentrated in degree $i$), so it is not clear if one can twist the signs of the symmetry isomorphisms. One case where this \emph{can} be carried out is for $k$ algebraic over a finite field and $H$ $\ell$-adic {\'e}tale cohomology (using a consequence of Deligne's solution of the Weil conjectures due to Katz and Messing). Thus we obtain (unconditionally) Tannakian categories $\ca{M}_{\text{{\'e}t},\ell}(k)\defl\ca{M}_{H_{\text{{\'e}t},\ell}}^{\mathrm{tw}}(k)$ of pure motives with a fiber functor $\bar{H}$ to $\mathbb{Q}_{\ell}$-vector spaces realizing $\ell$-adic cohomology. This category differs from the existing Tannakian categories of pure motives in positive characteristic (defined for example in  \cite{MILNE} and\cite[\S 9]{ANDRE}) since the latter are not known to have such a realization functor without assuming some conjectures. For example, the pro-reductive motivic Galois groups of \cite{ANDRE_KAHN} naturally act on a Weil cohomology theory which is not known to be isomorphic to $\ell$-adic cohomology; on the other hand, the motivic Galois groups obtained from $\ca{M}_{\text{{\'e}t},\ell}(k)$ are not known to be pro-reductive. The relationship with motivated cycles is reversed from the case of characteristic zero: every motivated cycle relative to $\ell$-adic {\'e}tale cohomology comes from a morphism in $\ca{M}_{\text{{\'e}t},\ell}(k)$ if $k$ is algebraic over a finite field (this follows from the hard Lefschetz theorem for $\ell$-adic cohomology proved by Deligne). Note that the categories $\ca{M}_{\text{{\'e}t},\ell}(k)$ are only conjecturally independent of the prime $\ell$.

 Since the functor $[-]$ is not necessarily full if the standard conjecture $D$ does not hold for $H$, we do get variations of the usual standard conjectures by asking that certain morphisms exist in $\ca{M}_H(k)$ rather than in $\mathrm{Mot}_H(k)$. We investigate various relationships between these ``weak'' standard conjectures and their structural implications for $\ca{M}_H(k)$ in \S \ref{section:standard}.

 In characteristic zero, there are also constructions of motivic Galois groups due to Ayoub and Nori, which coincide by work of Choudhury and Gallauer \cite{AYOUB,CHOUDHURY_GALLAUER}. It would be interesting to see if the methods of \cite{CHOUDHURY_GALLAUER} can be used to relate Nori's Tannakian category to the universal graded-Tannakian obtained from  the Betti realization and to understand the relationship between the various motivic Galois groups in characteristic zero.
 
 The construction of $\ca{M}_H$ is closely related to the construction of examples of $\mathrm{tt}$-fields by Balmer, Krause, and Stevenson \cite{BKS}, though the resulting objects are not quite the same. We do not expect that $\ca{M}_H$ always satisfies the maximality property required for the construction of $\mathrm{tt}$-fields. On the other hand, existence of fiber functors on $\mathrm{tt}$-fields (more precisely, on the abelian category appearing in the construction) is not discussed in \cite{BKS}. It would be interesting to understand the precise relationship between these constructions. It would also be interesting to relate the results about universal graded-Tannakian categories to derived versions of Tannaka duality due to Iwanari \cite{IWANARI} and Pridham \cite{PRIDHAM}.
 
  \section*{Acknowledgements}
 This project relies on a construction due to Piotr Pstr\k{a}gowski. I am very grateful to Piotr for explaining his construction and some of its interesting examples to me. I thank Niko Naumann for helpful discussions and encouragement and I thank Denis-Charles Cisinski and Marc Hoyois for suggesting several improvements. Support of the SFB 1085 ``Higher invariants'' in Regensburg is gratefully acknowledged.

\section{Graded-Tannakian categories associated to K{\"u}nneth functors}\label{section:triangulated}

\subsection{The underlying category}
 Let $K$ be a field. Throughout this section we fix an abelian group $A$ and a group homomorphism $\varepsilon \colon (A,+) \rightarrow (\mathbb{Z}^{\times},\cdot)$, and we write $(A,\varepsilon)\mbox{-}\Vect_K$ for the category of $A$-graded $K$-vector spaces with the Koszul symmetric monoidal structure: the symmetry isomorphism is constructed via $\varepsilon$ following the Koszul sign rules. More generally, given an additive symmetric monoidal category $\ca{A}$, we write $(A,\varepsilon)\mbox{-}\mathrm{gr}(\ca{A})$ for the corresponding symmetric monoidal category of $A$-graded objects in $\ca{A}$.
 
 Let $\ca{T}$ be a small triangulated category and suppose that $\ca{T}$ has a symmetric monoidal structure $(\ca{T},\otimes, \U)$. We demand that the following two compatibilities hold between the triangulated structure and the monoidal structure.
 
 \begin{dfn}
 Let $S^1=\U[1]$ be the shift of the unit object. Then $\ca{T}$ is called \emph{$\otimes$-triangulated} if there is a natural isomorphism $(-)[1] \cong -\otimes S^1$ and for each $X \in \ca{T}$, the functor $X \otimes -$ preserves distinguished triangles.
 
 Let $\ca{A}$ be a symmetric monoidal abelian category. A \emph{K{\"u}nneth functor} is a homological functor $H \colon \ca{T} \rightarrow \ca{A}$ which is symmetric strong monoidal.
 \end{dfn}

 Note that this in particular implies that $S^1$ is an invertible object in $\ca{T}$. We also assume that $\ca{T}$ is \emph{rigid}, that is, every object $X \in \ca{T}$ has a dual $X^{\vee}$ (meaning that $X \otimes - $ is left adjoint to $X^{\vee} \otimes -$). 
 Now suppose that $H \colon \ca{T} \rightarrow (A,\varepsilon)\mbox{-}\Vect_K$ is a K{\"u}nneth functor. Since $H$ preserves duals, the assumption that $\ca{T}$ is rigid implies that $HX$ is zero for all but finitely many degrees and always finite dimensional over $K$, in other words, $HX$ has finite total dimension.
 
 Given an additive category $\ca{A}$, we write $[\ca{A}^{\op},\Ab]$ for the category of additive presheaves on $\ca{A}$. Given a functor $F \colon \ca{A} \rightarrow \ca{C}$ to a cocomplete additive category $\ca{C}$, we get an induced left adjoint
 \[
  - \ten{\ca{A}} F \colon [\ca{A}^{\op},\Ab] \rightarrow \ca{C}
 \]
 which is known as either the \emph{functor tensor product} or \emph{left Kan extension} along the Yoneda embedding (and then denoted by $\Lan_Y F$). We write $\Hom_{\ca{A}}(F,-)$ for its right adjoint, which sends $C \in \ca{C}$ to the presheaf $\ca{C}(F-,C)$. Another common notation for this is $\widetilde{F}$.
 
 \begin{prop}\label{prop:exactness}
 Let $H \colon \ca{T} \rightarrow (A,\varepsilon)\mbox{-} \Vect_K$ be a homological functor. Then $H$ is \emph{flat}: the functor
 \[
 -\ten{\ca{T}} H \colon [\ca{T}^{\op},\Ab] \rightarrow (A,\varepsilon)\mbox{-} \Vect_K
 \]
 is left exact.
 \end{prop}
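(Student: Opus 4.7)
The plan is to factor $\widetilde{H}\defl -\ten{\ca{T}} H$ through an exact functor on the abelian category of coherent functors, and then to propagate exactness along filtered colimits. Since $\widetilde{H}$ is a left adjoint it is automatically right exact and additive; the content of the proposition is therefore that it preserves kernels.

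First I would invoke the fact that a triangulated category has weak kernels (any morphism may be completed to a distinguished triangle), so that $[\ca{T}^{\op},\Ab]$ is locally finitely presented with finitely presented objects precisely the \emph{coherent functors}, i.e.\ cokernels of maps between representables. By a classical theorem of Freyd (exploited in the triangulated setting by Verdier), the full subcategory $\mathrm{Coh}(\ca{T}^{\op})\subset [\ca{T}^{\op},\Ab]$ is abelian, and the Yoneda embedding $\iota\colon\ca{T}\to \mathrm{Coh}(\ca{T}^{\op})$, $X\mapsto \ca{T}(-,X)$, is the universal homological functor from $\ca{T}$ to an abelian category. In particular, $H$ factors essentially uniquely as $H\cong \bar{H}\circ\iota$ for an exact functor $\bar{H}\colon \mathrm{Coh}(\ca{T}^{\op})\to (A,\varepsilon)\mbox{-}\Vect_K$. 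Since the inclusion $\mathrm{Coh}(\ca{T}^{\op})\hookrightarrow[\ca{T}^{\op},\Ab]$ exhibits the latter as $\Ind(\mathrm{Coh}(\ca{T}^{\op}))$, the cocontinuous extension of $\bar{H}$ is forced to agree with $\widetilde{H}$ (both preserve all colimits and both send $\ca{T}(-,X)$ to $HX$). Because the target $(A,\varepsilon)\mbox{-}\Vect_K$ is Grothendieck abelian, kernels commute with filtered colimits there, and so exactness of $\bar{H}$ on $\mathrm{Coh}(\ca{T}^{\op})$ yields left exactness of $\widetilde{H}$ on all of $[\ca{T}^{\op},\Ab]$.

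The main obstacle is establishing that $\bar{H}$ is exact, equivalently the universality of $\iota$ in the sense above. Concretely, given a short exact sequence $0\to F'\to F\to F''\to 0$ in $\mathrm{Coh}(\ca{T}^{\op})$, one lifts it to compatible two-term presentations by representables, completes the underlying morphisms of $\ca{T}$ to distinguished triangles, and uses the resulting long exact sequences of representables together with the homological property of $H$ to read off the desired short exact sequence in $(A,\varepsilon)\mbox{-}\Vect_K$. This bookkeeping is where the triangulated structure is genuinely used; alternatively one may phrase the argument as directly verifying that the kernel of any map between coherent functors can be computed from a distinguished triangle on which $H$ produces a long exact sequence.
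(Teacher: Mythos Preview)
Your argument is correct and takes a genuinely different route from the paper. The paper proceeds via a Lazard-style argument: after reducing to an $\Ab$-valued homological functor using the degree projections, it shows directly that the opposite of the category of elements $\el(H)$ is filtered, so that $H$ is a filtered colimit of corepresentables and hence flat (the ``finitely generated free modules'' picture). You instead factor through Freyd's universal abelian category $\mathrm{Coh}(\ca{T}^{\op})$, invoke its universal property to obtain exactness of $\bar{H}$, identify $[\ca{T}^{\op},\Ab]$ with $\Ind\bigl(\mathrm{Coh}(\ca{T}^{\op})\bigr)$, and then propagate exactness along filtered colimits in the Grothendieck target. Both arguments ultimately rest on the same triangulated input---completing $f-g$ to a distinguished triangle and using that $H$ is homological---which in the paper's version verifies the coequalizer condition for filteredness of $\el(H)^{\op}$, and in yours underlies the proof of Freyd's theorem that $\bar{H}$ is exact. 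Your route is more conceptual and avoids the reduction to $\Ab$, at the price of importing the Freyd machinery as a black box; the paper's argument is more elementary and self-contained, making the role of the triangulated structure fully explicit in a single paragraph.
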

 
 \begin{proof}
 The functors which send a graded vector space $(V_i)_{i \in A}$ to the underlying abelian group of $V_i$ preseve all colimits, in particular functor tensor products, and they preserve exact sequences. Since they jointly detect isomorphisms, we are reduced to the case of a homological functor $H \colon \ca{T} \rightarrow \Ab$. Here we can use a well-known generalizations of Lazard's theorem about flat modules: if $H$ is a filtered colimit of representable functors $\ca{T}(-,X)$ (the ``finitely generated free modules'' in this context), then $H$ is flat. Since $\ca{T}$ has finite direct sums, it follows that $H$ is the colimit of the slice category of representable functors over $H$ in $[\ca{T},\Ab]$. This is equivalent to the opposite of the category $\el(H)$ of elements of $H$, with objects the pairs $(X,x \in HX)$ and morphisms $(X,x)\rightarrow (Y,y)$ the morphisms $f \colon X \rightarrow Y$ in $\ca{T}$ such that $Hf(x)=y$. Non-emptiness and the existence of the desired spans in this category is immediate from the fact that $\ca{T}$ has finite direct sums. It only remains to check that for any pair $f,g \colon X\rightarrow Y$ with $Hf(x)=Hf(y)$, there exists a morphism $h \colon V \rightarrow X$ and $v \in HV$ with $Hh(v)=x$ and $fh=gh$. Since $H$ is homological, any distinguished triangle
 \[
 \xymatrix{V \ar[r]^-{h} & X \ar[r]^-{f-g} & Y \ar[r] & V[1] } 
 \]
 in $\ca{T}$ yields such a pair $(V,v)$.
 \end{proof}
 
 It follows that the collection of additive subfunctors $i \colon S \rightarrow \ca{T}(-,X)$ with the property that $i \ten{\ca{T}} H$ is an isomorphism form an \emph{additive} Grothendieck topology $\tau$ on $\ca{T}$ in the sense of \cite[Definition~1.2]{BORCEUX_QUINTEIRO} (this is a many-object version of a Gabriel topology on a ring). We write $\sh_H(\ca{T})$ for the corresponding full subcategory of additive sheaves and
 \[
 L \colon [\ca{T}^{\op}, \Ab] \rightarrow \sh_H(\ca{T})
 \]
 for the exact reflector (this is the associated sheaf functor \cite[Theorem~4.4]{BORCEUX_QUINTEIRO}). Note that this is given by the usual plus construction, applied twice; the axioms for an additive Grothendieck topology ensure that the plus construction of an additive functor is again addtitive. It is exact since it is constructed using filtered colimits.

 For a general homological functor, we do not expect that this topology has good finiteness properties. As Pstr{\k a}gowski observed in \cite{PSTRAGOWSKI}, there is another natural topology on $\ca{T}$ induced by $H$ (see \cite[ \S 3.3]{PSTRAGOWSKI}).
 
 \begin{dfn}
 A morphism $p \colon X \rightarrow Y$ in $\ca{T}$ is called an \emph{$H$-epimorphism} if $Hp$ is an epimorphism.
 \end{dfn}
 
 The following result can be found in \cite[Lemma~3.19]{PSTRAGOWSKI}.
 
 \begin{prop}\label{prop:pstragowski}
 Let $\ca{T}$ be rigid $\otimes$-triangulated and let $H \colon \ca{T} \rightarrow (A,\varepsilon)\mbox{-}\Vect_K$ be a K{\"u}nneth functor. If
 \[
  \xymatrix{A \ar[r]^{f} & B \ar[r]^-{p} & C \ar[r] & A[1]}
 \]
is a distinguished triangle in $\ca{T}$ such that $p \colon B \rightarrow C$ is an $H$-epimorphism, then the sequence
 \[
 \xymatrix{0 \ar[r] & HA \ar[r]^-{f} & HB \ar[r]^-{p} & HC \ar[r] & 0}
 \]
 of $(A,\varepsilon)$-graded vector spaces is exact. In particular, $H$ sends homotopy pullbacks along $H$-epimorphisms to pullback diagrams, so $H$-epimorphisms are stable under homotopy pullback.
 \end{prop}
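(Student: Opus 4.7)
The plan is to apply $H$ to the distinguished triangle and exploit the resulting long exact sequence, together with the fact that $HS^1$ is invertible in $(A,\varepsilon)\mbox{-}\Vect_K$---a consequence of $S^1$ being invertible in $\ca{T}$ and of $H$ being symmetric strong monoidal via the K\"unneth isomorphism. Concretely, denoting the connecting map by $\partial\colon C \to A[1]$, the homological property of $H$ (together with rotation of distinguished triangles) produces the long exact sequence
\[
\cdots \to H(C[-1]) \xrightarrow{H(\partial[-1])} HA \xrightarrow{Hf} HB \xrightarrow{Hp} HC \xrightarrow{H\partial} H(A[1]) \to \cdots.
\]
Exactness at $HC$ together with the assumption that $Hp$ is epi forces $H\partial=0$. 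The $\otimes$-triangulated hypothesis identifies $X[-1]$ naturally with $X \otimes S^{-1}$, and the K\"unneth isomorphism turns $H(\partial[-1])$ into $H\partial$ tensored with the identity of the invertible object $(HS^1)^{-1}$, which vanishes as well. Exactness at $HA$ then gives that $Hf$ is monic, proving the short exact sequence.

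For the ``in particular'' clause, recall that a homotopy pullback of $p\colon B \to C$ along a morphism $q\colon B' \to C$ is an object $P$ fitting into a distinguished triangle
\[
P \to B \oplus B' \xrightarrow{(p,-q)} C \to P[1]
\]
whose first map encodes the two canonical projections. If $p$ is an $H$-epimorphism, then so is $(p,-q)$, since $H(p,-q)$ already surjects onto $HC$ through the $HB$ summand. Applying the short exact sequence established above yields
\[
0 \to HP \to HB \oplus HB' \to HC \to 0,
\]
which is precisely the statement that $HP$ realizes the pullback of $HB \xrightarrow{Hp} HC \xleftarrow{Hq} HB'$ in $(A,\varepsilon)\mbox{-}\Vect_K$. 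Since pullbacks of epimorphisms of (graded) vector spaces along arbitrary maps are again epimorphisms, the projection $HP \to HB'$ is surjective, so the homotopy pullback of an $H$-epimorphism along any morphism is again an $H$-epimorphism.

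The main technical point is the passage from $H\partial=0$ to $H(\partial[-1])=0$, which is exactly where the $\otimes$-triangulated compatibility and the strong monoidality of $H$ intervene in an essential way; without both hypotheses, vanishing of one connecting map in the long exact sequence would not automatically propagate to the shifted one. The remainder of the argument is purely formal manipulation of long exact sequences and of pullbacks in abelian categories.
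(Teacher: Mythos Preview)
Your proof is correct and follows essentially the same approach as the paper's: both use the long exact sequence and the identification $H(\varphi[-1])\cong H\varphi\otimes (HS^1)^{-1}$ coming from the $\otimes$-triangulated structure and strong monoidality of $H$, the only cosmetic difference being that the paper shifts $p$ (concluding $H(p[-1])$ is epi, hence the shifted connecting map vanishes) whereas you first deduce $H\partial=0$ and then shift $\partial$. The treatment of homotopy pullbacks and stability of $H$-epimorphisms is identical.
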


\begin{proof}
 The natural isomorphisms $(-)[1] \cong -\otimes S^1$ and $H(-\otimes S^1) \cong H(-) \otimes HS^1$ imply that $H(p[-1])$ is an epimorphism as well. Since $H$ is homological, we find from the exact sequence
 \[
 \xymatrix@C=40pt{H(B[-1]) \ar[r]^{Hp[-1]} & \ar[r] H(C[-1]) \ar[r]^-{Hq[-1]} & HA \ar[r]^{Hf}  & HB \ar[r]^{Hp} & HC }
 \]
 that $H(q[-1])=0$, from which the desired exactness follows.
 
 Homotopy pullbacks of $p \colon A \rightarrow B$ along $g \colon B^{\prime} \rightarrow B$ are defined by extending $(\begin{smallmatrix} p & -g \end{smallmatrix}) \colon A \oplus B^{\prime} \rightarrow B$ to a distinguished triangle, so the second claim follows from the first since $H(\begin{smallmatrix} p & -g \end{smallmatrix})$ is clearly an epimorphism. The final observation follows from the fact that epimorphisms are stable under pullback in any abelian category.
\end{proof}

 The $H$-epimorphisms thus constitute a \emph{singleton} Grothendieck coverage on $\ca{T}$. We can use this to generate an additive Grothendieck topology $\tau^P_H$, which we call the Pstr{\k a}gowski-topology, as follows: a subobject $i \colon S \rightarrow \ca{T}(-,X)$ lies in $\tau_H^P(X)$ if and only if there exists an $H$-epimorphism $p \colon X \rightarrow Y$ such that the morphism $\ca{T}(-,p)$ factors through $i$ (see for example \cite[ Appendix~A]{SCHAEPPI_COLIMITS}, though it is easier to see this here since $H$-epimorphisms are closed under finite compositions). We write
 \[
 \Sigma_H \defl \{ \xymatrix{F \ar[r]^-f & X \ar[r]^{p} & Y} \}
 \]
 for the set of sequences in $\ca{T}$ such that $p$ is an $H$-epimorphism and there exists $h \colon Y \rightarrow F[1]$ such that the triangle
 \[
 \xymatrix{F \ar[r]^-f & X \ar[r]^-p & Y \ar[r]^h & F[1]}
 \]
 is distinguished. The next result is key: it establishes the desired finiteness properties of $\sh_H(\ca{T})$.   
   
 \begin{prop}\label{prop:sheaf_characterization}
  Let $\ca{T}$ be rigid $\otimes$-triangulated and let $H \colon \ca{T} \rightarrow (A,\varepsilon)\mbox{-}\Vect_K$ be a K{\"u}nneth functor. Then the additive Grothendieck topologies $\tau_H$ and $\tau_H^P$ coincide. Moreover, an additive presheaf $G \colon \ca{T}^{\op} \rightarrow \Ab$ is a sheaf for $\tau_H$ if and only if it sends every sequence in $\Sigma_H$ to a left exact sequence
  \[
  \xymatrix{ 0 \ar[r] & GY \ar[r]^-{Gp} & GX \ar[r]^-{Gf} & GF }
  \]
  of abelian groups.
 \end{prop}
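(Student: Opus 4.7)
My plan is to establish $\tau_H = \tau_H^P$ first and then deduce the sheaf characterization from the fact that $\tau_H^P$ is generated by the singleton pretopology of $H$-epimorphisms, whose sheaf condition reduces to the stated left exactness via an explicit description of homotopy pullbacks in $\ca{T}$.

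For $\tau_H^P \subseteq \tau_H$, given an $H$-epimorphism $p \colon X^{\prime} \to X$ and the subfunctor $\langle p \rangle \subseteq \ca{T}(-, X)$ it generates, flatness of $H$ from Proposition~\ref{prop:exactness} identifies $\langle p \rangle \ten{\ca{T}} H$ with the image of $Hp$, which is all of $HX$. For any larger subfunctor $i \colon S \hookrightarrow \ca{T}(-,X)$, the composite $\langle p \rangle \ten{\ca{T}} H \to S \ten{\ca{T}} H \to HX$ is an isomorphism while both arrows are monomorphisms by exactness of $-\ten{\ca{T}} H$, forcing $i \ten{\ca{T}} H$ to be iso. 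For the reverse inclusion, rigidity of $\ca{T}$ makes $HX$ finite dimensional; given that $i \ten{\ca{T}} H$ is an isomorphism, the description of the functor tensor product from the proof of Proposition~\ref{prop:exactness} shows that every element of $HX$ has the form $Hf(y)$ for some $f \in S(Y)$ and $y \in HY$. Picking representatives $(f_i, y_i)$ for a basis $x_1, \dots, x_n$ of $HX$, the morphism $f = (f_1, \dots, f_n) \colon \bigoplus_i Y_i \to X$ lies in $S$ by additivity of $S$ and is an $H$-epimorphism, so $S$ contains the sieve it generates and thus lies in $\tau_H^P(X)$.

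For the sheaf characterization, I would invoke the general principle that if an additive topology is generated by a pretopology of single morphisms stable under pullback (Proposition~\ref{prop:pstragowski} supplies this for $H$-epimorphisms), then $G$ is a sheaf precisely when for every $H$-epimorphism $p \colon X \to Y$ the diagram
\[
 \xymatrix{GY \ar[r]^-{Gp} & GX \ar@<.5ex>[r]^-{G\pi_1} \ar@<-.5ex>[r]_-{G\pi_2} & G(X \pb{Y} X) }
\]
is an equalizer in $\Ab$. To recast this in the form claimed, I compute $X \pb{Y} X$ directly from the distinguished triangle $F \xrightarrow{f} X \xrightarrow{p} Y \to F[1]$: it is the fiber of $(p,-p) \colon X \oplus X \to Y$, and one checks $X \pb{Y} X \cong X \oplus F$ with projections $\pi_1 = \pi_X + f\pi_F$ and $\pi_2 = \pi_X$ under this identification. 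Applying $G$ then gives $G\pi_1 - G\pi_2 = G\pi_F \circ Gf$, and since $G\pi_F$ is a split monomorphism, the equalizer condition collapses to $Gp$ being the kernel of $Gf$, i.e., to exactness of $0 \to GY \to GX \to GF$.

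The main obstacle will be this final translation: it requires both the explicit identification $X \pb{Y} X \cong X \oplus F$ and careful bookkeeping of the two projections under $G$, together with the general principle that sheaves for a singleton pretopology are determined by the single-cover sheaf condition. The latter is standard once closure of $H$-epimorphisms under homotopy pullback is in hand via Proposition~\ref{prop:pstragowski}.
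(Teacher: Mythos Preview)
Your argument is correct. For $\tau_H = \tau_H^P$ it is essentially the paper's proof: the paper also extracts finitely many morphisms in $S$ whose $H$-images jointly span the finite-dimensional $HX$ and takes their sum; you phrase the surjectivity of $S \ten{\ca{T}} H \to HX$ element-wise via the coend description, while the paper writes $S$ as a quotient of a sum of representables, but the content is the same.

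For the sheaf characterization your route genuinely differs. The paper presents the covering sieve $S_p$ as a cokernel $\bigoplus_{pg=0} \ca{T}(-,Z) \to \ca{T}(-,X) \to S_p$ (via \cite[Proposition~A.2.5]{SCHAEPPI_COLIMITS}) and then collapses the large sum to $\ca{T}(-,F)$ using that $F$ is a \emph{weak kernel} of $p$: the canonical map $\bigoplus_{pg=0}\ca{T}(-,Z) \to \ca{T}(-,F)$ is a split epimorphism, so testing against all $g$ with $pg=0$ reduces to testing against $f$ alone. You instead invoke the \v{C}ech equalizer for the singleton cover and compute the homotopy self-fibre product explicitly as $X \pb{Y} X \cong X \oplus F$, reducing the equalizer of $G\pi_1,G\pi_2$ to $\ker(Gf)$ via the split monomorphism $G\pi_F$. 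Both are clean; yours makes the descent picture visible, while the paper's avoids the pullback computation entirely by working directly with the weak kernel. One point worth tightening in your write-up: the ``general principle'' you cite is normally stated for pretopologies with genuine pullbacks, whereas $\ca{T}$ only has weak (homotopy) pullbacks. This is not a real obstruction---the weak-pullback property already identifies the equalizer with $\Hom(S_p,G)$, and stability of $H$-epimorphisms under homotopy pullback (Proposition~\ref{prop:pstragowski}) is precisely the refinement condition needed to pass from the generating sieves $S_p$ to arbitrary $\tau_H^P$-covering sieves---but it deserves a sentence of justification rather than an appeal to a principle whose textbook hypotheses are not literally satisfied.
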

 
 \begin{proof}
 Let $j \colon S \rightarrow \ca{T}(-,X)$ be an element of $\tau_H(X)$, that is, a monomorphism such that $j \ten{\ca{T}} H$ is an isomorphism. We need to check that there exists an $H$-epimorphism $p \colon Y \rightarrow X$ such that $\ca{T}(-,p)$ factors through $i$. There exists a set $I$, objects $X_i \in \ca{T}$ and morphisms $f_i \colon \ca{T}(-, X_i) \rightarrow S$ such that the induced morphism $(f_i)_{i \in I} \colon \bigoplus_{i \in I} \ca{T}(-,X_i) \rightarrow S$ is an epimorphism in $[\ca{T}^{\op},Ab]$. Thus $j \circ (f_i)_{i \in I}$ is sent to an epimorphism by $-\ten{\ca{T}} H$. Note that $jf_i$ is of the form $\ca{T}(-,p_i)$ for a unique $p_i \colon X_i \rightarrow X$, and the natural isomorphism $\ca{T}(-,Y) \ten{\ca{T}} H \cong HY$ implies that the morphisms $H(p_i) \colon HX_i \rightarrow HX$ are jointly epimorphic. 
 
 Since $HX$ sends each object in $\ca{T}$ to an object with a dual, that is, a graded vector space of finite total dimension, it follows that there exists a finite subset $I_0 \subseteq I$ such that $(Hp_i)_{i \in I_0} \colon H(\bigoplus_{i \in I_0} X_i) \rightarrow HX$ is still an epimorphism. We have thus found our desired $H$-epimorphism $p=(p_i)_{i \in I_0} \colon \bigoplus_{i \in I_0} X_i \rightarrow X$ such that $\ca{T}(-,p)$ factors through $j$. We have shown that $\tau_H \subseteq \tau_H^P$.
 
 Conversely, if $j \colon S \rightarrow \ca{T}(-,X)$ is any morphism and $p \colon Y \rightarrow X$ is an $H$-epimorphism with $\ca{T}(-,p)=jq$ for some $q$, then $jq \ten{\ca{T}} H= \ca{T}(-,p) \ten{\ca{T}} H \cong Hp$ is an epimorphism. Thus $j \ten{\ca{T}} H$ is an epimorphism, but it is also a monomorphism since $- \ten{\ca{T}} H$ is exact by Proposition~\ref{prop:exactness}. Thus $j \in \tau_H(X)$, which concludes the proof that $\tau_H=\tau_H^P$.
 
 Since the Pstr{\k a}gowski-topology $\tau_H^P$ is the additive Grothendieck topology associated to the singleton Grothendieck coverage of $H$-epimorphisms, it follows from \cite[Proposition~A.2.5]{SCHAEPPI_COLIMITS} that $G$ is a sheaf for $\tau_H^P$ if and only if the functor $[\ca{T}^{\op},\Ab]$ sends the sequence sequence
 \begin{equation}\label{eqn:sheaf}
\xymatrix{ \bigoplus_{ \{f \colon Z \rightarrow X \vert pf=0\} } \ca{T}(-,Z)  \ar[r] & \ca{T}(-,X) \ar[r]^{\ca{T}(-,p)} & \ca{T}(-,Y)} \tag{$\star$}
  \end{equation}
 to a right exact sequence for every $H$-epimorphism $p$. Choose any distinguished triangle
 \[
 \xymatrix{F \ar[r]^-f & X \ar[r]^p & Y \ar[r] & F[1]}
 \]
 in $\ca{T}$. Since $F$ is a weak kernel in $\ca{T}$, the left morphism in ~\eqref{eqn:sheaf} factors through $\ca{T}(-,f) \colon \ca{T}(-,F) \rightarrow \ca{T}(-,X)$ via a \emph{split} epimorphism (since $f \colon F\rightarrow X$ is itself responsible for one of the direct summands). Thus $G$ is a sheaf for $\tau_H^P$ if and only if $[\ca{T}^{\op},\Ab](-,G)$ sends the sequence
 \[
 \xymatrix{ \ca{T}(-,F) \ar[r]^-{\ca{T}(-,f)} & \ca{T}(-,X) \ar[r]^-{\ca{T}(-,p)} & \ca{T}(-, Y ) } 
 \]
 to a right exact sequence for all sequences in $\Sigma_H$, so the second claim follows by Yoneda.
 \end{proof}

\begin{prop}\label{prop:faithful_exact}
  Let $\ca{T}$ be rigid $\otimes$-triangulated and let $H \colon \ca{T} \rightarrow (A,\varepsilon)\mbox{-}\Vect_K$ be a K{\"u}nneth functor. Then the functor
  \[
  \Hom_{\ca{T}}(F,-) \colon (A,\varepsilon)\mbox{-}\Vect_K \rightarrow [\ca{T}^{\op},\Ab]
  \]
 factors through the category $\sh_H{\ca{T}}$, so the restriction of $-\ten{\ca{T}} H$ to $\sh_H(\ca{T})$ gives a left adjoint to $\Hom_{\ca{T}}(F,-)$. This restriction is both exact and faithful.
\end{prop}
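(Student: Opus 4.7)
The plan is to first verify that for every $V \in (A,\varepsilon)\mbox{-}\Vect_K$ the presheaf $\Hom_{\ca{T}}(H,V) = (A,\varepsilon)\mbox{-}\Vect_K(H-,V)$ belongs to $\sh_H(\ca{T})$, which will supply the desired factorization of $\Hom_{\ca{T}}(H,-)$. By Proposition~\ref{prop:sheaf_characterization}, this amounts to checking that $\Hom_{\ca{T}}(H,V)$ sends each sequence $F \rightarrow X \rightarrow Y$ in $\Sigma_H$ to a left exact sequence of abelian groups. Proposition~\ref{prop:pstragowski} supplies the short exact sequence $0 \rightarrow HF \rightarrow HX \rightarrow HY \rightarrow 0$, and applying the left exact functor $(A,\varepsilon)\mbox{-}\Vect_K(-,V)$ then delivers exactly what is required. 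Since $\sh_H(\ca{T})$ is a full subcategory of $[\ca{T}^{\op},\Ab]$, the existing adjunction $-\ten{\ca{T}} H \dashv \Hom_{\ca{T}}(H,-)$ on presheaves restricts to an adjunction whose left adjoint is the restriction of $-\ten{\ca{T}} H$ to $\sh_H(\ca{T})$.

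Next I would address exactness. As a left adjoint, this restriction preserves all colimits and is in particular right exact. For left exactness, it factors as the inclusion $\sh_H(\ca{T}) \hookrightarrow [\ca{T}^{\op},\Ab]$ (left exact as a right adjoint to the sheafification $L$) followed by $-\ten{\ca{T}} H$, which is left exact on all presheaves by Proposition~\ref{prop:exactness}.

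The remaining point, and where the main obstacle will lie, is faithfulness. Since the functor is an exact additive functor between abelian categories, faithfulness is equivalent to reflecting zero objects, so I would suppose $G \in \sh_H(\ca{T})$ satisfies $G \ten{\ca{T}} H = 0$ and show $G = 0$ by proving that every section $s \in G(X)$ vanishes. Viewing $s$ as a presheaf morphism $\ca{T}(-,X) \rightarrow G$ with kernel $S \hookrightarrow \ca{T}(-,X)$ and image $I \hookrightarrow G$ computed in presheaves, exactness of $-\ten{\ca{T}} H$ on presheaves (Proposition~\ref{prop:exactness}) yields $I \ten{\ca{T}} H \hookrightarrow G \ten{\ca{T}} H = 0$, and hence that $S \ten{\ca{T}} H \rightarrow HX$ is an isomorphism, meaning $S$ lies in $\tau_H(X)$. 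Both $s$ and the zero map $\ca{T}(-,X)\rightarrow G$ restrict to zero on $S$, so the sheaf axiom for $G$ along the covering $S$ forces $s=0$. The obstacle is really just in recognizing that the hypothesis $G \ten{\ca{T}} H = 0$ is precisely what manufactures the $\tau_H$-coverings needed to invoke the sheaf condition; everything else is a formal consequence of the preceding propositions.
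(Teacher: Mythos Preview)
Your proposal is correct and follows essentially the same route as the paper: the factorization through sheaves via Proposition~\ref{prop:sheaf_characterization} and Proposition~\ref{prop:pstragowski}, exactness from the left adjoint plus Proposition~\ref{prop:exactness}, and faithfulness by showing that the kernel sieve $S$ of any section lies in $\tau_H(X)$. The only cosmetic difference is that the paper phrases the last step as ``$LG=0$ for any presheaf $G$ with $G\ten{\ca{T}}H=0$'' and then invokes that $Lj$ is invertible, whereas you work directly with the sheaf $G$ and appeal to separatedness along the covering $S$; the underlying argument is identical.
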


\begin{proof}
 By Proposition~\ref{prop:sheaf_characterization}, to check that $\Hom_{\ca{T}}(F,-)$ lands in sheaves, we need to check that $(A,\varepsilon)\mbox{-}\Vect_K(F-,V)$ sends sequences in $\Sigma_H$ to left exact sequences for each graded vector space $V$. Equivalently, we need to show that $F$ sends sequences in $\Sigma_H$ to \emph{right} exact sequences, which was proved in Proposition~\ref{prop:pstragowski}.
 
 It follows that the restriction of $-\ten{\ca{T}} H$ is a left adjoint, so it is right exact. It is also left exact since the inclusion of sheaves in presheaves is left exact and the functor $- \ten{\ca{T}} H \colon [\ca{T}^{\op},Ab] \rightarrow (A,\varepsilon)\mbox{-}\Vect_K$ is exact by Proposition~\ref{prop:exactness}.
 
 It remains to check that the restriction of $-\ten{\ca{T}} H$ to $\sh_H(\ca{T})$ is faithful. It clearly suffices to show the following: for any $G \in [\ca{T}^{\op},\Ab]$ such that $G \ten{\ca{T}} H \cong 0$, we also have $LG \cong 0$. Since the morphisms $f \colon \ca{T}(-,X) \rightarrow G$ are jointly epimorphic and $L$ is a left adjoint, it suffices to check that $Lf = 0$.
 
 Let $p \colon \ca{T}(-,X) \rightarrow G_0$ be the image of $f$ and let $j \colon S \rightarrow \ca{T}(-,X)$ be the kernel of $p$. Since $- \ten{\ca{T}} H$ is exact, it follows that $j \ten{\ca{T}} H$ is an isomorphism, so $j \in \tau_H(X)$. Since $\sh_H(\ca{T})$ is the category of sheaves for $\tau_H$, this implies that $Lj$ is an isomorphism. It follows that both $Lp=0$ and $Lf=0$, so $LG\cong 0$ since $f$ was an arbitrary morphism. 
\end{proof}

 Recall that an object $C$ of a category $\ca{C}$ is called \emph{finitely presentable} if $\ca{C}(C,-)$ preserves filtered colimits, and that $\ca{C}$ is called \emph{locally finitely presentable} if it has a strong generating set consisting of finitely presentable objects.
 
\begin{cor}\label{cor:lfp}
 Let $\ca{T}$ be rigid $\otimes$-triangulated and let $H \colon \ca{T} \rightarrow (A,\varepsilon)\mbox{-}\Vect_K$ be a K{\"u}nneth functor. Then the category $\Sh_{H}(\ca{T})$ is a locally finitely presentable abelian category. The left adjoint \[
 - \ten{\ca{T}} H \colon \sh_{H}(\ca{T}) \rightarrow (A,\varepsilon)\mbox{-}\Vect_K
\]
is comonadic and its right adjoint $\Hom_{\ca{T}}(F,-)$ is cocontinuous. Thus $\sh_{H}(\ca{T})$ is equivalent to the category of comodules of an $(A,\varepsilon)$-graded $K\mbox{-}K$-coalgebroid.
\end{cor}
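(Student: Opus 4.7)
\textit{Proof plan.} The plan is to treat the four claims in turn. For local finite presentability, I would use that the sheafification functor $L$ is constructed by iterating the plus construction twice, which is a filtered colimit of finite limits; hence $L$ preserves filtered colimits in addition to being exact. It follows that filtered colimits of sheaves are computed pointwise in $[\ca{T}^{\op}, \Ab]$, and therefore that each sheafified representable $L\ca{T}(-,X)$ is finitely presentable: $\sh_H(\ca{T})(L\ca{T}(-,X), -)$ is evaluation at $X$ composed with the fully faithful inclusion into presheaves, both of which preserve filtered colimits. Since the representables generate presheaves and $L$ is a left adjoint, the small set $\{L\ca{T}(-,X)\}_{X \in \ca{T}}$ generates $\sh_H(\ca{T})$, yielding local finite presentability.

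For comonadicity of $-\ten{\ca{T}} H$, I would apply Beck's crude comonadicity theorem. Proposition~\ref{prop:faithful_exact} already supplies the adjunction $-\ten{\ca{T}} H \dashv \Hom_{\ca{T}}(F,-)$ together with faithfulness and exactness of the left adjoint. A faithful exact additive functor between abelian categories is conservative (it reflects the zero object and therefore isomorphisms), and exactness provides preservation of all equalizers. Beck's theorem then yields comonadicity directly.

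For the remaining two statements, the key computation is $\Hom_{\ca{T}}(F, V)(X) = (A,\varepsilon)\mbox{-}\Vect_K(HX, V)$. Since $HX$ has finite total dimension (by rigidity of $\ca{T}$, as noted at the start of the subsection), this functor preserves all colimits in $V$. Hence the pointwise colimit of a diagram $\Hom_{\ca{T}}(F, V_j)$ in presheaves coincides with $\Hom_{\ca{T}}(F, \colim_j V_j)$, which is already in the image of $\Hom_{\ca{T}}(F,-)$ and therefore a sheaf; so it represents the colimit in $\sh_H(\ca{T})$, establishing cocontinuity of $\Hom_{\ca{T}}(F,-)$. The composite comonad $T = (-\ten{\ca{T}} H) \circ \Hom_{\ca{T}}(F,-)$ on $(A,\varepsilon)\mbox{-}\Vect_K$ is then cocontinuous, and a standard Eilenberg--Watts-style argument packages such a cocontinuous comonad on graded $K$-vector spaces as an $(A,\varepsilon)$-graded $K$-$K$-coalgebroid. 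Combined with comonadicity, this gives the desired equivalence of $\sh_H(\ca{T})$ with the comodule category of a coalgebroid. I expect the main subtlety to lie in articulating the coalgebroid structure precisely enough for use in later sections; all the other ingredients are essentially formal consequences of the preceding propositions and the finiteness coming from rigidity.
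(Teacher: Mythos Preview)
Your proposal is essentially correct and close to the paper's proof, with one soft spot and one place where you are actually more direct than the paper.

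The soft spot is in the local finite presentability argument. The assertion that the plus construction is ``a filtered colimit of \emph{finite} limits'' is not true for an arbitrary additive Grothendieck topology: the matching object $[\ca{T}^{\op},\Ab](S,F)$ for a covering sieve $S$ is in general an infinite limit. What makes it finite here is precisely Proposition~\ref{prop:sheaf_characterization}, which shows that the topology is generated by singleton covers (the $H$-epimorphisms) admitting weak kernels, so that the sheaf condition reduces to a kernel condition. The paper uses this proposition directly: since a sheaf is characterised by sending each sequence in $\Sigma_H$ to a left exact sequence, and finite limits commute with filtered colimits in $\Ab$, the inclusion $U$ preserves filtered colimits; hence $L$ preserves finitely presentable objects and the $L\bigl(\ca{T}(-,X)\bigr)$ form the required generating set. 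Your detour through the plus construction can be made to work once you invoke Proposition~\ref{prop:sheaf_characterization}, but at that point the paper's route is shorter. (Note also that ``$L$ preserves filtered colimits'' is automatic for any left adjoint; the content lies in $U$ preserving them.)

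For cocontinuity of $\Hom_{\ca{T}}(F,-)$, your argument is more direct than the paper's. You use that $HX$ is dualizable, so $(A,\varepsilon)\mbox{-}\Vect_K(HX,-)\cong (HX)^{\vee}\otimes -$ preserves all colimits; hence $\Hom_{\ca{T}}(F,-)$ into presheaves is cocontinuous, and since its values are already sheaves, it is cocontinuous into $\sh_H(\ca{T})$. The paper instead first reduces (via the comonadicity just established) to showing that the comonad on graded vector spaces is cocontinuous, then checks this by the dual criterion that the presheaf-level left adjoint $-\ten{\ca{T}}H$ preserves finitely presentable objects (because $\ca{T}(-,X)\ten{\ca{T}}H\cong HX$ is finite dimensional). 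Both are valid; yours avoids the extra reduction step. The comonadicity (Beck) and coalgebroid identification match the paper.
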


\begin{proof}
 The category is abelian since the reflector $L \colon [\ca{T}^{\op},\Ab] \rightarrow  \sh_H{\ca{T}}$ is exact. From Proposition~\ref{prop:sheaf_characterization} it follows that $\sh_H(\ca{T})$ is closed under filtered colimits in $[\ca{T}^{\op},\Ab]$, that is, the right adjoint $U \colon \sh_H(\ca{T}) \rightarrow [\ca{T}^{\op},\Ab]$ of $L$ commutes with filtered colimits. Thus $L$ preserves finitely presentable objects, so the objects $L\bigl(\ca{T}(-,X) \bigr)$ form the desired generating set consisting of finitely presentable objects.
 
 The restriction of the left adjoint $- \ten{\ca{T}} H$ to sheaves is faithful and exact, so comonadic by Beck's theorem. To see that its right adjoint is cocontinuous, it thus suffices to check that the comonad is cocontinuous. But this is precisely the comonad associated to the adjunction
 \[
 -\ten{\ca{T}} H \dashv \Hom_{\ca{T}}(F,-) \colon [\ca{T}^{\op},\Ab] \rightarrow (A,\varepsilon)\mbox{-}\Vect_K \smash{\rlap{,}}
 \]
 so it suffices to check that the right adjoint, considered as a functor with target $[\ca{T}^{\op},\Ab]$, is cocontinuous. This is the case if and only if it commutes with filtered colimits (since every exact sequence in the domain is split). Both categories are locally finitely presentable, so we only need to check that the left adjoint preserves finitely presentable objects. This follows from the isomorphism $\ca{T}(-,X) \ten{\ca{T}} H \cong HX$ and the fact that $HX$ is finite dimensional (since $H$ preserves duals). Finally, a cocontinuous comonad on the category of $(A,\varepsilon)$-graded $K$-vector spaces is precisely an $(A,\varepsilon)$-graded $K\mbox{-}K$-coalgebroid.
\end{proof}

\subsection{Day convolution}

 Let $\ca{A}$ be a small $\Ab$-enriched symmetric monoidal category. From Day's thesis (see \cite{DAY}) we know that there is a symmetric monoidal closed structure on $[\ca{A}^{\op},\Ab]$ such that the Yoneda embedding is symmetric strong monoidal, with tensor product given by the Day convolution product. We denote the tensor product by $\ten{\mathrm{Day}}$ and the internal hom by $[-,-]_{\mathrm{Day}}$. Day's \emph{reflection} theorem gives general criteria for when a symmetric monoidal closed structure passes to a reflective subcategory. For example, if $\Sigma$ is a set of sequences $\{A_0 \rightarrow A_1 \rightarrow A_2\}$ in $\ca{A}$, then we can consider the full subcategory $\Lex_{\Sigma}(\ca{A}) \subseteq [\ca{A}^{\op},\Ab]$ of presheaves $G$ which send each sequence in $\Sigma$ to a left exact sequence
 \[
 \xymatrix{0 \ar[r] & GA_2 \ar[r] & GA_1 \ar[r] & GA_0}
 \]
 of abelian groups. By \cite[Theorem~6.11]{KELLY_BASIC}, this is a reflective subcategory (though of course not necessarily abelian without imposing further conditions on the set $\Sigma$). In this case, Day's reflection theorem tells us that there exists an essentially unique symmetric monoidal closed structure on $\Lex_{\Sigma}(\ca{A})$ if and only if for every $A \in \ca{A}$ and every $G \in \Lex_{\Sigma}(\ca{A})$, the internal hom $[\ca{A}(-,A),G]_{\mathrm{Day}}$ again lies in $\Lex_{\Sigma}(\ca{A})$. From the isomorphisms
 \begin{align*}
[\ca{A}(-,A),G]_{\mathrm{Day}}(B) & \cong [\ca{A}^{\op},\Ab]\bigl(\ca{A}(-,B),[\ca{A}(-,A),G]_{\mathrm{Day}}\bigr) \\
&\cong   [\ca{A}^{\op},\Ab]\bigl( \ca{A}(-,B \otimes A), G \bigr) \\
&\cong  G(B\otimes A)
 \end{align*}
 (Yoneda, definition of internal hom and strong monoidality of the Yoneda embedding, Yoneda), it follows that Day's reflection theorem is applicable if $\Sigma$ is closed under tensoring with $A$ for each $A \in \ca{A}$. Moreover, the resulting symmetric monoidal closed structure has a universal property. To state it, we need some notation. Let $\ca{C}$ be any cocomplete symmetric monoidal closed category. We write $\Fun_{\otimes,\Sigma}(\ca{A},\ca{C})$ for the category of symmetric strong monoidal additive functors which send sequences in $\Sigma$ to \emph{right} exact sequences in $\ca{C}$. Given a further cocomplete symmetric monoidal category $\ca{D}$, we write $\Fun_{\otimes, \mathrm{c}}(\ca{C},\ca{D})$ for the category of symmetric strong monoidal left adjoints (here $\mathrm{c}$ stands for cocontinuous, which is equivalent to being left adjoint for locally presentable categories). We similarly have categories $\Fun_{\mathrm{lax},\Sigma}(\ca{A},\ca{C})$ and $\Fun_{\mathrm{lax},c}(\ca{C},\ca{D})$ of \emph{lax} monoidal functors sending sequences in $\Sigma$ to right exact sequences, respectively the lax monoidal functors which are left adjoint.
 
  Let $Z \colon \ca{A} \rightarrow \Lex_{\Sigma}(\ca{A})$ be the composite $LY$. The following result is certainly known, though there does not seem to be a reference spelling it out at the required level of generality.

 \begin{thm}\label{thm:day_universal}
 Let $\ca{A}$ be a small additive symmetric monoidal category and $\Sigma$ a set of pairs of composable morphisms in $\ca{A}$ which is closed under tensoring with objects in $\ca{A}$. Then the functor $Z \colon \ca{A} \rightarrow \Lex_{\Sigma}(\ca{A})$ is the universal symmetric strong monoidal additive functor to a cocomplete symmetric monoidal closed additive category sending sequences in $\Sigma$ to cokernel diagrams: for every cocomplete symmetric monoidal closed category $\ca{C}$, the functor
 \[
 - \circ Z \colon \Fun_{\otimes, \mathrm{c}} \bigl(\Lex_{\Sigma}(\ca{A}), \ca{C} \bigr) \rightarrow \Fun_{\otimes, \Sigma}(\ca{A},\ca{C})
 \]
 is an equivalence of categories. Similarly, the functor
 \[
  - \circ Z \colon \Fun_{\mathrm{lax}, \mathrm{c}} \bigl(\Lex_{\Sigma}(\ca{A}), \ca{C} \bigr) \rightarrow \Fun_{\lax, \Sigma}(\ca{A},\ca{C})
 \]
 is an equivalence. In both cases, the inverse sends $G \colon \ca{A} \rightarrow \ca{C}$ to the restriction of $-\ten{\ca{A}} G$ to $\Lex_{\Sigma}(\ca{A})$.
 \end{thm}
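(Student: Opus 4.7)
The plan is to combine two of Day's classical results. By Day's extension theorem, for any cocomplete symmetric monoidal closed additive category $\ca{D}$, restriction along the Yoneda embedding $Y \colon \ca{A} \to [\ca{A}^{\op}, \Ab]$ induces an equivalence $\Fun_{\otimes, c}([\ca{A}^{\op}, \Ab], \ca{D}) \simeq \Fun_{\otimes}(\ca{A}, \ca{D})$, with inverse $F \mapsto - \ten{\ca{A}} F$, and analogously in the lax case. The displayed calculation of $[\ca{A}(-,A), G]_{\mathrm{Day}}(B) \cong G(B \otimes A)$ together with the hypothesis that $\Sigma$ is closed under tensoring shows that Day's reflection theorem applies, producing an essentially unique symmetric monoidal closed structure on $\Lex_{\Sigma}(\ca{A})$ for which the reflector $L$ is symmetric strong monoidal. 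Consequently $Z = LY$ is symmetric strong monoidal and additive.

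I would first verify that $Z$ sends each $s = (A_0 \to A_1 \to A_2) \in \Sigma$ to a cokernel diagram, so that restriction along $Z$ lands in $\Fun_{\otimes, \Sigma}(\ca{A}, \ca{C})$. Using $L \dashv U$ and Yoneda, one has $\Lex_{\Sigma}(\ca{A})(ZA_i, G) \cong GA_i$ naturally in a sheaf $G$, and by the defining property of $\Lex_{\Sigma}(\ca{A})$ this identifies $GA_2$ with $\ker(GA_1 \to GA_0)$. Yoneda in $\Lex_{\Sigma}(\ca{A})$ then gives $ZA_2 \cong \coker(ZA_0 \to ZA_1)$, and any symmetric strong monoidal cocontinuous $F'$ preserves this cokernel, so $F' Z$ sends $\Sigma$-sequences to right exact sequences.

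For the inverse, given $F \in \Fun_{\otimes, \Sigma}(\ca{A}, \ca{C})$, Day's extension theorem produces $\tilde{F} = - \ten{\ca{A}} F$ with right adjoint $\Hom_{\ca{A}}(F, -)$. The crucial observation is that this right adjoint lands in $\Lex_{\Sigma}(\ca{A})$: for each $C \in \ca{C}$ and $s \in \Sigma$, the sequence $0 \to \ca{C}(FA_2, C) \to \ca{C}(FA_1, C) \to \ca{C}(FA_0, C)$ is left exact precisely because $FA_0 \to FA_1 \to FA_2$ is a cokernel diagram in $\ca{C}$ by hypothesis. Equivalently, $\tilde{F}$ inverts all $L$-equivalences. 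Then $F' := \tilde{F} \circ U$ satisfies $F' L \cong \tilde{F}$ via $\tilde{F}$ applied to the reflection unit, is cocontinuous (colimits in $\Lex_{\Sigma}(\ca{A})$ are computed by applying $L$ to colimits in presheaves), and inherits a unique symmetric strong monoidal structure from $\tilde{F}$ because $L$ is essentially surjective and symmetric strong monoidal. The lax case is strictly parallel using the lax versions of Day's extension and reflection theorems.

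Finally, the assignments $F \mapsto F'$ and $F' \mapsto F' Z$ are mutually quasi-inverse by the uniqueness clauses in Day's extension theorem applied once to $\tilde{F}$ and once to $F' L$. The main obstacle is the bookkeeping of the symmetric monoidal coherences when transporting structure across the reflection: once $L$ has been established as symmetric strong monoidal and essentially surjective the transport is formally automatic, but a careful write-up must invoke enough of Day's reflection theorem to verify that the resulting structure on $F'$ is again symmetric strong monoidal in a unique way, and that the final equivalence is one of symmetric monoidal categories rather than of underlying categories only.
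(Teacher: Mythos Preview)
Your proposal is correct and follows essentially the same route as the paper: reduce to the universal property of Day convolution on presheaves (the paper cites \cite[Theorem~5.1]{IM_KELLY}, you call it Day's extension theorem), then show that a cocontinuous $\tilde{F}$ on presheaves factors through $\Lex_{\Sigma}(\ca{A})$ precisely when its right adjoint lands there, equivalently when $\tilde{F}\eta$ is invertible. The one place where the paper is sharper is your sentence ``inherits a unique symmetric strong monoidal structure from $\tilde{F}$ because $L$ is essentially surjective'': essential surjectivity alone does not do this, and the paper instead observes that the \emph{lax} monoidal structure on $U$ (as right adjoint to the strong monoidal $L$) is built from instances of $\eta$, so once $\tilde{F}\eta$ is an isomorphism the composite $\tilde{F}U$ is automatically \emph{strong} monoidal---this is exactly the bookkeeping you flagged as the main obstacle.
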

 
 \begin{proof}
 From \cite[Theorem~5.1]{IM_KELLY}, we know that $-\circ Y$ induces an equivalence
 \[ 
 -\circ Y \colon \Fun_{\otimes, c}\bigl([\ca{A}^{\op},\Ab], \ca{C} \bigr) \simeq \Fun_{\otimes, \varnothing}(\ca{A},\ca{C})
 \]
 (and similarly for lax monoidal functors). Thus it suffices to check that precomposing with $L$ induces an equivalence between $\Fun_{\otimes, c}\bigl(\Lex_{\Sigma}(\ca{A}), \ca{C} \bigr)$ and the full subcategory of $\Fun_{\otimes, \mathrm{c}}([\ca{A}^{\op},\Ab],\ca{C})$ consisting of those $F$ which send sequences in $\Sigma$ to cokernel diagrams. This is the case if and only if the right adjoint of $F$ lands in $\Lex_{\Sigma}(\ca{A})$. Since this category is reflective, this happens if and only if $F \eta \colon F \Rightarrow FUL$ is an isomorphism (where $U$ denotes the right adjoint of $L$, that is, the inclusion). The lax monoidal structure of $U$ is built from instances of $\eta$ and isomorphisms coming from the strong monoidal structure of $L$, so the composite $FU$ is \emph{strong} monoidal and gives the desired inverse to $- \circ L$. The lax case follows more directly since the composite $FU$ of lax monoidal functors is always lax monoidal.
 
 The second statement is immediate from the proof of \cite[Theorem~5.1]{IM_KELLY}, which produces an inverse to $-\circ Y$ by endowing the left Kan extension $-\ten{\ca{A}} G$ of $G$ along the Yoneda embedding with the structure of a symmetric strong   monoidal functor (respectively lax monoidal if $G$ is only lax monoidal).
 \end{proof}
 
 \begin{rmk}
 Note that the proof of Theorem~\ref{thm:day_universal} works equally well for categories enriched in a complete and cocomplete symmetric monoidal closed category $\ca{V}$  and any set $\Sigma$ of cylinders closed under tensoring with objects in $\ca{A}$.
 \end{rmk}

 We now return to our K{\"u}nneth functor $H \colon \ca{T} \rightarrow (A,\varepsilon)\mbox{-}\Vect_K$. From Proposition~\ref{prop:sheaf_characterization}, we know that $\Sh_H(\ca{A})$ is precisely $\Lex_{\Sigma_H}(\ca{T})$, where $\Sigma_H$ denotes the set of homotopy fiber sequences of $H$-epimorphisms, that is, pairs of morphisms $f,p$ such that $Hp$ is an epimorphism and there exists a distinguished triangle $\xymatrix{ F \ar[r]^-{f} & X \ar[r]^-{p} \ar[r] & Y \ar[r] & F[1]}$ and $Hp$ in $\ca{T}$. Since $H$ is strong monoidal and $\ca{T}$ is rigid $\otimes$-triangulated, this set is clearly closed under tensoring with any object in $\ca{T}$. Thus we can apply Days reflection theorem to obtain a symmetric strong monoidal closed structure on $\Sh_{H}(\ca{T})$ such that $L \colon [\ca{T},\Ab] \rightarrow \sh_H(\ca{T})$ is symmetric strong monoidal. Note that the image of $\ca{A}$ under $LY$ generates $\sh_{H}(\ca{A})$ and consists of objects with duals since $\ca{T}$ is rigid. It follows immediately that finitely presentable objects are closed under tensor product.
 
 \begin{dfn}\label{dfn:m}
 Let $\ca{T}$ be rigid $\otimes$-triangulated and let $H \colon \ca{T} \rightarrow (A,\varepsilon)\mbox{-}\Vect_K$ be a K{\"u}nneth functor. We write $\ca{M}(H)$ for the full symmetric monoidal subcategory of $\sh_H(\ca{T})$ consisting of finitely presentable objects. We write
 \[
 [-] \colon \ca{T} \rightarrow \ca{M}(H)
 \]
 for the symmetric strong monoidal functor $LY$. We write $\bar{H}$ for the restriction of $-\ten{\ca{T}} H$ to $\ca{M}(H)$.
 \end{dfn}

\begin{thm}\label{thm:m}
 In the situation of Definition~\ref{dfn:m}, the category $\ca{M}(H)$ is a rigid abelian symmetric monoidal category (that is, every finitely presentable object of $\sh_H(\ca{T})$ has a dual). The objects $[X]$ for $X \in \ca{T}$ form a generator of $\ca{M}(H)$. The additive functor $\bar{H}$ is faithful, exact, and symmetric strong monoidal. In particular, the unit object of $\ca{M}(H)$ is simple. The diagram
 \[
 \xymatrix{ \ca{T} \ar[rr]^{[-]}  \ar[rd]_H && \ca{M}(H) \ar[ld]^{\bar{H} } \\ & (A,\varepsilon)\mbox{-}\Vect_K }
 \]
 commutes up to symmetric monoidal isomorphism (the functor $[-]$ is in particular homological). Moreover, the inclusion of $\ca{M}(H)$ in $\sh_H(\ca{H})$ induces an equivalence 
 \[
 \Ind\bigl(\ca{M}(H)\bigr) \rightarrow \sh_H(\ca{T})
 \]
 of symmetric monoidal closed categories, and this category is equivalent to the category of comodules of an $(A,\varepsilon)$-graded $K \mbox{-} K$ Hopf algebroid.
\end{thm}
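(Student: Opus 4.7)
The plan is to establish the claims roughly in the order of the statement, with rigidity of $\ca{M}(H)$ as the key technical step from which most of the other structural properties follow.

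I would first verify that the generators $[X] = L\bigl(\ca{T}(-,X)\bigr)$ are dualizable in $\sh_H(\ca{T})$. This holds because $[-]$ is symmetric strong monoidal (as the composite of the Yoneda embedding with the symmetric strong monoidal reflector $L$ from the Day convolution setup) and $\ca{T}$ itself is rigid, so $[X^{\vee}]$ serves as the dual of $[X]$. These generators are finitely presentable because $L$ preserves finite presentability (its right adjoint preserves filtered colimits, as in the proof of Corollary~\ref{cor:lfp}), and they are closed under tensor product up to isomorphism. The crucial claim---that \emph{every} finitely presentable object of $\sh_H(\ca{T})$ is dualizable---would then be deduced from a general principle: in a cocomplete abelian $K$-linear symmetric monoidal closed category, a set of finitely presentable dualizable generators closed under tensor products forces every finitely presentable object to be dualizable. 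The plan for proving this is to write an arbitrary finitely presentable $M$ as $\coker([g])$ for some $g \colon X \to Y$ in $\ca{T}$ (since finitely presentable presheaves on the additive category $\ca{T}$ are cokernels of maps between representables and $L$ preserves cokernels), take the candidate $M^{\vee} = \ker\bigl([g^{\vee}] \colon [Y^{\vee}] \to [X^{\vee}]\bigr)$, and construct the coevaluation by exploiting faithful exactness of $\bar{H}$ to reduce to the corresponding statement for finite-dimensional graded vector spaces. Constructing this coevaluation---in the absence of an a priori antipode on the ambient coalgebroid---is the main technical obstacle.

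Once rigidity is in hand, abelianness of $\ca{M}(H)$ follows: finitely presentable objects are automatically closed under cokernels and finite direct sums, and rigidity supplies closure under kernels through the identification $\ker(f) \cong \bigl(\coker(f^{\vee})\bigr)^{\vee}$. The $[X]$ generate $\ca{M}(H)$ by construction and by the presentation of finitely presentable objects just described. Faithfulness and exactness of $\bar{H}$ are already given by Proposition~\ref{prop:faithful_exact}, and its strong monoidality is inherited from the symmetric strong monoidal structure on $L$ provided by Day's reflection theorem. Commutativity of the triangle reduces to the computation $\bar{H}[X] \cong \ca{T}(-,X) \ten{\ca{T}} H \cong HX$, natural and symmetric monoidal in $X$. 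Simplicity of the unit would then be extracted from $\bar{H}(\U) \cong K$ and the fact that a faithful exact functor reflects simplicity: any proper subobject $N \subsetneq \U$ in $\ca{M}(H)$ satisfies $\bar{H}(N) \subsetneq K$, hence $\bar{H}(N) = 0$, which forces $N = 0$ by faithfulness.

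For the remaining equivalence and Hopf algebroid claim, I would use that $\sh_H(\ca{T})$ is locally finitely presentable by Corollary~\ref{cor:lfp} together with the identification of $\ca{M}(H)$ as its full subcategory of finitely presentable objects to obtain the equivalence $\Ind\bigl(\ca{M}(H)\bigr) \simeq \sh_H(\ca{T})$ of underlying categories. The symmetric monoidal closed structures match because the tensor product on $\sh_H(\ca{T})$ preserves colimits in each variable and so is determined by its restriction to $\ca{M}(H)$, which is precisely the tensor product on $\Ind\bigl(\ca{M}(H)\bigr)$ by the universal property expressed in Theorem~\ref{thm:day_universal}. Finally, the coalgebroid $C$ provided by Corollary~\ref{cor:lfp} acquires an antipode precisely because $\ca{M}(H)$ is rigid: under the equivalence between $\ca{M}(H)$ and the subcategory of dualizable $C$-comodules, the duality $(-)^{\vee} \colon \ca{M}(H)^{\op} \to \ca{M}(H)$ corresponds to the antipode data making $C$ into a Hopf algebroid.
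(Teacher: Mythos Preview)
Your outline is viable, but it inverts the paper's logic on the two central claims. The paper first establishes that the coalgebroid is a \emph{Hopf} algebroid and only then deduces rigidity of $\ca{M}(H)$, whereas you attempt rigidity directly and extract the antipode afterward. Concretely, the paper observes that since $\sh_H(\ca{T})$ is generated by the dualizable objects $[X]$, the adjunction $-\ten{\ca{T}} H \dashv \Hom_{\ca{T}}(H,-)$ satisfies the projection formula, so the induced comonad is Hopf monoidal (this is a one-line citation to an external result). Rigidity then drops out from the standard fact that a comodule over a Hopf algebroid is dualizable if and only if its underlying graded vector space is---which for finitely presentable objects is automatic. This route is shorter because the projection formula needs only that the \emph{generators} are dualizable, not all finitely presentable objects.

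Your direct approach to rigidity has a soft spot at the coevaluation. Faithful exactness of $\bar{H}$ lets you \emph{verify} that a given morphism is an isomorphism or satisfies the triangle identities, but it does not let you \emph{lift} a morphism from graded vector spaces back to $\ca{M}(H)$. To construct the coevaluation you would, for example, push $\coev_{[Y]}$ forward to a map $\U \to M \otimes [Y^{\vee}]$ and then need it to land in $M \otimes \ker([g^{\vee}])$; for this you must know that the canonical map $M \otimes \ker([g^{\vee}]) \to \ker(M \otimes [g^{\vee}])$ is an isomorphism, i.e., that $M\otimes-$ is left exact. This does follow (apply the conservative, exact, strong monoidal $\bar{H}$ and use exactness of tensoring with vector spaces), but you have not said it. The paper's remark immediately after the proof records a cleaner direct variant: show that $-\ten{\ca{T}} H$ preserves the internal homs $[M,N]_{\mathrm{Day}}$ for $M$ finitely presentable (easy, writing $M$ as a cokernel of a map between duals), and then conclude since a conservative strong monoidal functor preserving internal homs detects duals. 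That is the argument closest in spirit to what you sketched, and it avoids the coevaluation bookkeeping.
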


\begin{proof}
 Note that the equivalence $\Ind\bigl(\ca{M}(H)\bigr) \simeq \sh_H(\ca{T})$ is immediate from the definition of $\ca{M}(H)$ as full subcategory of finitely presentable objects in the locally finitely presentable category $\sh_{H}(\ca{T})$.

 The functor $- \ten{\ca{T}} H \colon \sh_{H}(\ca{T}) \rightarrow (A,\varepsilon)\mbox{-} \Vect_K$ is faithful and exact by Proposition~\ref{prop:faithful_exact}. Moreover, since $H$ sends sequences in $\Sigma_H$ to cokernel diagrams by Proposition~\ref{prop:pstragowski}, it follows from the universal property of Day convolution that the restriction $- \ten{\ca{T}} H$ to $\sh_{H}(\ca{T})$ has a symmetric monoidal structure with a symmetric monoidal natural isomorphism $H \cong \bigl( LY(-) \ten{\ca{T}} H \bigr)$ (see Theorem~\ref{thm:day_universal}). In particular, the unit object is sent to a one-dimensional vector space by a faithful exact functor, so it is simple. The restriction to finitely presentable objects in the target yields the desired triangle of functors. 
 
 Since the duals generate $\sh_H(\ca{T})$, the adjunction $-\ten{\ca{T}}H \dashv \mathrm{Hom}_{\ca{T}}(H,-)$ satisfies the projection formula \cite[Proposition~3.8]{SCHAEPPI_TENSOR}, so the induced comonad is Hopf monoidal. The last claim follows from Corollary~\ref{cor:lfp}. Finally, a comodule over a Hopf algebroid has a dual if and only if its underlying object has a dual, so the category $\ca{M}(H)$ is indeed indeed rigid and abelian.
\end{proof}

\begin{rmk}
 The crucial property of rigidity in the above theorem can also be proved more directly since $-\ten{\ca{T}} H$ preserves the internal hom-objects $[M,N]_{\mathrm{Day}}$ if $M$ is finitely presentable (this follows easily by writing $M$ as cokernel of a morphism between duals), so $-\ten{\ca{T}} H$ detects duals since it is conservative.
\end{rmk}

 If $\ca{M}$ is a rigid abelian symmetric monoidal category such that there exists a faithful and exact symmetric strong monoidal functor to $(A,\varepsilon)$-graded $K$-vector spaces a \emph{graded Tannakian category}, and we call such functors \emph{graded fiber functors}.
 
 \begin{cor}\label{cor:natural_iso}
 Suppose $\ca{T}$ is rigid tensor triangulated and we have two K{\"u}nneth functors
 \[
 H \colon \ca{T} \rightarrow (A,\varepsilon)\mbox{-} \Vect_K \quad \text{and} \quad  H^{\prime} \colon \ca{T} \rightarrow (A,\varepsilon)\mbox{-} \Vect_{K^{\prime}}
 \]
 such that there exists common field extension $K^{\prime \prime}$ of $K$ and $K^{\prime}$ and a natural isomorphism $K^{\prime\prime} \ten{K} H \cong K^{\prime \prime} \ten{K^{\prime}} H^{\prime}$ of functors to $A$-graded $K^{\prime \prime}$-vector spaces (not necessarily compatible with the symmetric monoidal structure). Then $\ca{M}(H)=\ca{M}(H^{\prime})$ and there exist fiber functors
\[  
 \bar{H} \colon \ca{M}(H) \rightarrow (A,\varepsilon)\mbox{-} \Vect_K \quad \text{and} \quad  \bar{H}^{\prime} \colon \ca{M}(H) \rightarrow (A,\varepsilon)\mbox{-} \Vect_{K^{\prime}}
\]
 and symmetric monoidal isomorphisms $H \cong  \bar{H}[-]$ and $H^{\prime} \cong  \bar{H}^{\prime}[-]$. 
\end{cor}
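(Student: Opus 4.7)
The plan is to show that the construction of $\ca{M}(H)$ depends on the K{\"u}nneth functor $H$ only through the class of $H$-epimorphisms in $\ca{T}$, and then to verify that this class coincides with the class of $H'$-epimorphisms under the stated hypothesis. Once this is established, the remaining content is just an instance of Theorem~\ref{thm:m} applied to $H$ and $H'$ separately.

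First I would show that a morphism $f \colon X \to Y$ in $\ca{T}$ is an $H$-epimorphism if and only if it is an $H'$-epimorphism. Since any field extension is faithfully flat, $Hf$ is an epimorphism of $A$-graded $K$-vector spaces if and only if $K'' \ten{K} Hf$ is surjective. Under the natural isomorphism $K'' \ten{K} H \cong K'' \ten{K'} H'$ — which, crucially, is used here only as an isomorphism of additive functors and need not be monoidal — this map is identified with $K'' \ten{K'} H'f$, and the same faithful flatness argument applied to $K''/K'$ shows this is an epimorphism if and only if $H'f$ is. Hence the classes of $H$- and $H'$-epimorphisms in $\ca{T}$ coincide.

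Next, the Pstr\k{a}gowski-topology $\tau_H^P$ is generated by the $H$-epimorphisms, and the set $\Sigma_H$ of homotopy fiber sequences of $H$-epimorphisms depends only on them. The previous step thus yields $\tau_H^P = \tau_{H'}^P$ and $\Sigma_H = \Sigma_{H'}$. By Proposition~\ref{prop:sheaf_characterization}, also $\tau_H = \tau_{H'}$, so $\sh_H(\ca{T}) = \sh_{H'}(\ca{T})$ as reflective subcategories of $[\ca{T}^{\op}, \Ab]$. The symmetric monoidal closed structure on the sheaf category is produced by Day's reflection theorem from the fixed Day convolution on $[\ca{T}^{\op}, \Ab]$ and is uniquely determined by the reflective inclusion, so the monoidal structures also agree. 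Restricting to finitely presentable objects gives $\ca{M}(H) = \ca{M}(H')$ as rigid abelian symmetric monoidal categories, with a common functor $[-] = LY \colon \ca{T} \to \ca{M}(H)$. Applying Theorem~\ref{thm:m} separately to $H$ and $H'$ then yields the two graded fiber functors $\bar{H}$ and $\bar{H'}$ on this common category, together with the required symmetric monoidal isomorphisms $H \cong \bar{H}[-]$ and $H' \cong \bar{H'}[-]$.

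The only subtle point to flag is the use of the hypothesis: it provides only an additive natural isomorphism, but this suffices because the class of $H$-epimorphisms — and every ingredient used to build $\ca{M}(H)$ from it — is visible to the underlying additive structure alone. Monoidality of the realizations enters only a posteriori, through the separate applications of Theorem~\ref{thm:m} that produce the two fiber functors on the (now common) category $\ca{M}(H)$.
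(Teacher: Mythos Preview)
Your proof is correct and follows exactly the same approach as the paper's: observe that the hypothesis forces the classes of $H$-epimorphisms and $H'$-epimorphisms to coincide, hence $\ca{M}(H)=\ca{M}(H')$, and then invoke Theorem~\ref{thm:m} for each functor separately. The paper's argument is terser (it calls the coincidence of epimorphism classes ``clear''), but your added details about faithful flatness and the dependence of the construction only on the additive data are exactly what one would unpack if asked.
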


\begin{proof}
 It is clear from the condition that the notions of $H$-epimorphism and $H^{\prime}$-epimorphism coincide. Thus $\ca{M}(H)=\ca{M}(H^{\prime})$ and the claim follows from Theorem~\ref{thm:m}.
\end{proof}

 The construction can for example be applied to the mixed Weil cohomology theories defined by Cisinski and D\'{e}glise \cite{CISINSKI_DEGLISE}. Given a perfect field $k$, we write $\mathrm{DM}_{\mathrm{gm}}(k)$ for Voevodsky's triangulated category of mixed motives. 
 
 \begin{cor}\label{cor:mixed_weil}
  Let $k$ be a perfect field and let $E$ be a mixed Weil cohomology theory on smooth $k$-schemes with coefficient field $K$. Then the realization functor $R_{\ca{E}} \colon \mathrm{DM}_{\mathrm{gm}}(k) \rightarrow \mathrm{D}^{b}(K)$ of \cite[Theorem~2.7.14]{CISINSKI_DEGLISE} naturally factors through a universal graded-Tannakian category $\ca{M}(R_{\ca{E}})$.
 \end{cor}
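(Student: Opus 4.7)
The approach is to recognize $R_{\ca{E}}$, post-composed with total cohomology, as a K{\"u}nneth functor in the sense of this paper, and then to define $\ca{M}(R_{\ca{E}})$ as the graded-Tannakian category produced by Theorem~\ref{thm:m}. Concretely I would take $A = \mathbb{Z}$ with the Koszul sign character $\varepsilon(i) = (-1)^i$ and consider
\[
H \defl \bigl( C \mapsto (H^i C)_{i \in \mathbb{Z}} \bigr) \circ R_{\ca{E}} \colon \mathrm{DM}_{\mathrm{gm}}(k) \longrightarrow (\mathbb{Z},\varepsilon)\mbox{-}\Vect_K.
\]

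Three ingredients need to be assembled before Theorem~\ref{thm:m} can be applied. First, $\mathrm{DM}_{\mathrm{gm}}(k)$ is essentially small and rigid $\otimes$-triangulated: these are foundational properties of Voevodsky's category, with $S^1 = \mathbf{1}[1]$ invertible and the shift given by tensoring with $S^1$. Second, taking total cohomology defines an equivalence from $\mathrm{D}^{b}(K)$ onto the full subcategory of bounded, finite-dimensional objects in $(\mathbb{Z},\varepsilon)\mbox{-}\Vect_K$, because over a field every bounded complex is quasi-isomorphic to its cohomology; this equivalence is symmetric strong monoidal with Koszul signs (the classical K{\"u}nneth isomorphism for complexes of vector spaces) and sends distinguished triangles to long exact sequences. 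Third, by \cite[Theorem~2.7.14]{CISINSKI_DEGLISE}, $R_{\ca{E}}$ is a symmetric monoidal triangulated functor. Composing, $H$ is symmetric strong monoidal and homological, hence a K{\"u}nneth functor.

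Theorem~\ref{thm:m} applied to $H$ then yields a rigid abelian symmetric monoidal category $\ca{M}(H)$, a graded fiber functor $\bar{H}$, and a symmetric strong monoidal homological functor $[-] \colon \mathrm{DM}_{\mathrm{gm}}(k) \to \ca{M}(H)$ with $\bar{H} \circ [-] \cong H$ as symmetric monoidal functors. Setting $\ca{M}(R_{\ca{E}}) \defl \ca{M}(H)$ and post-composing $\bar{H}$ with a quasi-inverse of the cohomology equivalence of the previous paragraph produces a faithful symmetric monoidal functor $\bar{R}_{\ca{E}} \colon \ca{M}(R_{\ca{E}}) \to \mathrm{D}^{b}(K)$ with $R_{\ca{E}} \cong \bar{R}_{\ca{E}} \circ [-]$, which is the claimed factorization. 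The only real obstacle is bookkeeping: fixing sign conventions so that the Koszul symmetry on graded vector spaces genuinely matches the one produced by the realization, and passing to a small model of $\mathrm{DM}_{\mathrm{gm}}(k)$ to meet the smallness hypothesis of Theorem~\ref{thm:m}.
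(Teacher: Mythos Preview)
Your proposal is correct and follows exactly the paper's approach: compose $R_{\ca{E}}$ with the symmetric monoidal equivalence $\mathrm{D}^{b}(K)\simeq(\mathbb{Z},\varepsilon)\mbox{-}\Vect^{\fd}_K$ given by taking cohomology to obtain a K{\"u}nneth functor, and then invoke Theorem~\ref{thm:m}. Your write-up simply spells out in more detail what the paper compresses into two sentences (rigidity and smallness of $\mathrm{DM}_{\mathrm{gm}}(k)$, the sign conventions, and the explicit lift back to $\mathrm{D}^{b}(K)$).
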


\begin{proof}
 By \cite[Theorem~2.7.14]{CISINSKI_DEGLISE} the functor $R_{\ca{E}}$ is triangulated and symmetric strong monoidal, so it gives rise to a K\"{u}nneth functor after applying the (homological) equivalence $\mathrm{D}^{b}(K) \simeq (\mathbb{Z},\varepsilon)\mbox{-}\Vect^{\fd}_K$. The existence of $\ca{M}(R_{\ca{E}})$ thus follows from Theorem~\ref{thm:m}; its universal property is discussed in \S \ref{section:universal} below.
\end{proof}

\subsection{The universal property}\label{section:universal}

 Let $\ca{A}$, $\ca{B}$ be abelian symmetric monoidal categories such that the tensor product is right exact in each variable. We write $\Fun_{\otimes, \mathrm{rex}}(\ca{A},\ca{B})$ for the category of right exact symmetric strong monoidal functors between them.

 For a triangulated category $\ca{T}$, we say that two homological functors $H \colon \ca{T} \rightarrow \ca{A}$ and $H^{\prime} \colon \ca{T} \rightarrow \ca{B}$ \emph{have the same strength} if for every morphism $f\colon X \rightarrow Y$ in $\ca{T}$, we have $Hf=0$ if and only if $H^{\prime} f=0$.
 
 \begin{thm}\label{thm:universal_property}
 In the situation of Definition~\ref{dfn:m}, the functor $[-] \colon \ca{T} \rightarrow \ca{M}(H)$ classifies K{\"u}nneth functors of the same strength as $H$. More precisely, sending $G \in \Fun_{\otimes, \mathrm{rex}}\bigl( \ca{M}(H),\ca{A} \bigr)$ to $G[-] \colon \ca{T} \rightarrow \ca{A}$ gives an equivalence between $\Fun_{\otimes, \mathrm{rex}}\bigl( \ca{M}(H),\ca{A} \bigr)$ and the category of K{\"u}nneth functors $\ca{T} \rightarrow \ca{A}$ of the same strength as $H$ for each abelian category $\ca{A}$ with right exact symmetric monoidal structure. Moreover, each functor $G \in \Fun_{\otimes, \mathrm{rex}}\bigl( \ca{M}(H),\ca{A} \bigr)$ is faithful and exact.
 \end{thm}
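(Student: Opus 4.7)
The plan is to construct the inverse to $- \circ [-]$ by combining the Day convolution universal property (Theorem~\ref{thm:day_universal}) with the Ind-completion of $\ca{A}$. Given a K{\"u}nneth functor $H^{\prime} \colon \ca{T} \rightarrow \ca{A}$ of the same strength as $H$, the coincidence of $H^{\prime}$-epimorphisms with $H$-epimorphisms (deduced from the strength hypothesis applied to the connecting morphism of a distinguished triangle) allows one to apply the argument of Proposition~\ref{prop:pstragowski} to $H^{\prime}$ directly, showing that $H^{\prime}$ sends every sequence in $\Sigma_H$ to a short exact sequence in $\ca{A}$, in particular to a cokernel diagram. Composing with the Yoneda embedding $\ca{A} \hookrightarrow \Ind(\ca{A})$ (which preserves finite colimits) and invoking Theorem~\ref{thm:day_universal} yields a cocontinuous symmetric strong monoidal extension $\widetilde{H^{\prime}} \colon \sh_H(\ca{T}) \rightarrow \Ind(\ca{A})$ with $\widetilde{H^{\prime}}[X] \cong H^{\prime}(X)$.

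To produce $G \in \Fun_{\otimes, \mathrm{rex}}\bigl(\ca{M}(H), \ca{A}\bigr)$, I would restrict $\widetilde{H^{\prime}}$ to the subcategory $\ca{M}(H) \subseteq \sh_H(\ca{T})$ of finitely presentable objects; the key point is that the restriction lands in $\ca{A}$. Since $\ca{M}(H)$ is rigid (Theorem~\ref{thm:m}) and $\widetilde{H^{\prime}}$ is strong monoidal, each $\widetilde{H^{\prime}}(M)$ is dualizable in $\Ind(\ca{A})$ with dual $\widetilde{H^{\prime}}(M^{\vee})$; because the tensor unit of $\Ind(\ca{A})$ lies in $\ca{A}$ and is thus finitely presentable, the identification $\Hom(X,-) \cong \Hom(\U, X^{\vee} \otimes -)$ shows that dualizable objects in $\Ind(\ca{A})$ are finitely presentable, hence lie in $\ca{A}$. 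The resulting $G$ is right exact (cokernels in $\ca{M}(H)$ are preserved by $\widetilde{H^{\prime}}$, and the embedding $\ca{A} \hookrightarrow \Ind(\ca{A})$ preserves finite colimits) and symmetric strong monoidal, with $G[-] \cong H^{\prime}$ by construction. Full faithfulness of $- \circ [-]$ then follows by applying the uniqueness clauses of Theorem~\ref{thm:day_universal} and of the Ind-completion universal property to monoidal natural transformations between two such $G_1, G_2$.

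It remains to show that any right exact symmetric strong monoidal $G \colon \ca{M}(H) \rightarrow \ca{A}$ is exact and faithful; this yields, together with homologicality of $[-]$ from Theorem~\ref{thm:m}, that $G[-]$ is a K{\"u}nneth functor of the same strength as $H$, so that the correspondence is well-defined in the forward direction. Exactness follows by a dualization argument: rigidity of $\ca{M}(H)$ ensures $G$ preserves duals, so dualizing a short exact sequence, applying right exactness of $G$, and dualizing back exhibits left exactness. For faithfulness, consider $\ca{K} = \{X \in \ca{M}(H) : G(X) = 0\}$, which is a Serre subcategory by exactness and a tensor ideal by strong monoidality; if some nonzero $X$ were in $\ca{K}$, the coevaluation $\U \rightarrow X \otimes X^{\vee}$ would be nonzero by the triangle identities, and simplicity of $\U$ (Theorem~\ref{thm:m}) would make it monic, so Serre-closure would force $\U \in \ca{K}$, contradicting $G(\U) \cong \U_{\ca{A}} \neq 0$. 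The main obstacle I expect is the verification that the restriction of $\widetilde{H^{\prime}}$ lands in $\ca{A}$: the rigidity-plus-dualizability shortcut is clean but requires care in setting up the monoidal Ind-completion and the link between dualizability and finite presentability; the remaining verifications then follow formally from the two universal properties and the standard rigid symmetric monoidal arguments sketched above.
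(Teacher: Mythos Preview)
Your proposal is correct and follows essentially the same route as the paper: pass to $\Ind(\ca{A})$, use the Day-convolution universal property (Theorem~\ref{thm:day_universal}) once you know that $H'$ sends sequences in $\Sigma_H$ to cokernel diagrams (which follows, as you say, from the equality of $H$- and $H'$-epimorphisms via the connecting map and Proposition~\ref{prop:pstragowski} applied to $H'$), and then observe that dualizable objects of $\Ind(\ca{A})$ lie in $\ca{A}$ because the unit is finitely presentable. The only substantive difference is in the ``faithful and exact'' clause: the paper simply invokes Deligne's result \cite[Corollaire~2.10]{DELIGNE}, valid for any right exact symmetric strong monoidal functor out of a rigid abelian category with simple unit, whereas you reprove it by hand. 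Your exactness argument is sound once one notes that in $\ca{A}$ the dual of an epimorphism between dualizable objects is a monomorphism (using right exactness of $\otimes$ and the adjunction $\Hom(Z,p^{\vee})\cong\Hom(Z\otimes p,\U)$); combined with right exactness of $G$ on the original sequence this gives the full short exact sequence. Your faithfulness argument via the Serre $\otimes$-ideal $\ca{K}=\{X:G(X)=0\}$ and simplicity of $\U$ is also correct, and uses exactness (already established) to pass from conservativity on objects to faithfulness.
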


\begin{proof}
 The last statement was proved by Deligne for symmetric strong monoidal functors with arbitrary abelian rigid domain with simple unit object, see \cite[Corollaire~2.10]{DELIGNE}. Thus $G[-]$ is homological of the same strength as $H$.
 
 Conversely, if $H^{\prime} \colon \ca{T} \rightarrow \ca{A}$ is a K{\"u}nneth functor, we can compose it with the inclusion in $\Ind(\ca{A})$ to get a K{\"u}nneth functor whose target is cocomplete and symmetric monoidal closed. If $H$ and $H^{\prime}$ have the same strength, then the notions of $H$-epimorphism and $H^{\prime}$-epimorphism coincide. Applying Proposition~\ref{prop:pstragowski} to $H^{\prime}$, we find that $H^{\prime}$ sends each sequence in $\Sigma_H$ to a cokernel diagram in $\Ind(\ca{A})$. The conclusion follows from the universal property of Day convolution (Theorem~\ref{thm:day_universal}) and the observation that each object in $\Ind(\ca{A})$ which has a dual lies in $\ca{A}$ (since the tensor unit in $\Ind(\ca{A})$ is finitely presentable).
\end{proof}

\begin{cor}\label{cor:derived}
 Let $\ca{T}=\mathrm{D}^{b}(\ca{K})$ be the bounded derived category of a Tannakian category $\ca{K}$ with fiber functor $w \colon \ca{K}\rightarrow \Vect_K$. Let $H \colon \mathrm{D}^{b}(\ca{K}) \rightarrow (\mathbb{Z},\varepsilon)\mbox{-}\Vect_K$ be the K\"{u}nneth functor induced by $w$. Then the composite
 \[
\xymatrix{ \ca{K} \ar[r]^-{\mathrm{incl}_0} & \mathrm{D}^{b}(\ca{K}) \ar[r]^-{[-]} & \ca{M}(H) }
 \]
 is exact, full, and faithful.
\end{cor}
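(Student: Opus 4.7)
Write $F$ for the composite $\ca{K} \to \mathrm{D}^b(\ca{K}) \to \ca{M}(H)$. The plan is to first extract faithfulness and exactness as formal consequences of the natural isomorphism $\bar{H} \circ F \cong w$ (which holds by Theorem~\ref{thm:m}, since $H$ restricted to $\ca{K}$ is $w$ concentrated in degree $0$), and then to address fullness by an explicit computation of morphisms in $\ca{M}(H)$ as local sections of a sheafification.

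Faithfulness of $F$ is immediate: both $w$ and $\bar{H}$ are faithful, so $F$ is faithful. For exactness, a short exact sequence $0 \to A \to B \to C \to 0$ in $\ca{K}$ yields a distinguished triangle $A \to B \to C \to A[1]$ in $\mathrm{D}^b(\ca{K})$, which the homological functor $[-]$ sends to a long exact sequence in $\ca{M}(H)$. I would argue that the two connecting morphisms $[C[-1]] \to [A]$ and $[C] \to [A[1]]$ vanish by applying $\bar{H}$: they become maps of graded $K$-vector spaces concentrated in distinct degrees, hence zero, and faithfulness of $\bar{H}$ lifts this vanishing back to $\ca{M}(H)$. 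The long exact sequence then collapses to the desired short exact sequence.

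The main work, and the principal obstacle, is fullness. By the Yoneda lemma, $\Hom_{\ca{M}(H)}([A],[B]) \cong L\ca{T}(-,B)(A)$, and by Proposition~\ref{prop:sheaf_characterization} this sheafification is taken with respect to the Pstr{\k a}gowski topology generated by the $H$-epimorphisms. I would unravel the plus construction and represent an element by a pair $(p\colon X \to A,\, \phi\colon X \to B)$ in $\mathrm{D}^b(\ca{K})$ with $p$ an $H$-epimorphism and $\phi k = 0$, where $k\colon K \to X$ is a homotopy fibre of $p$. Taking $H^0$ of the fibre triangle and using that $A$ lies in the heart produces an exact sequence $H^0(K) \to H^0(X) \to A \to 0$ in $\ca{K}$, so the condition $\phi k = 0$ forces $H^0(\phi)$ to descend uniquely to a morphism $f\colon A \to B$ with $H^0(\phi) = f \circ H^0(p)$. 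The assignment $(p,\phi) \mapsto f$ is surjective (realised by $(\id_A, f)$) and well-defined on equivalence classes, since $H^0$ of any refining $H$-epimorphism is still an epimorphism in $\ca{K}$.

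It remains to establish injectivity; this is where the key technical ingredient enters. I would show that $(p,\phi)$ and $(\id_A, f)$ admit a common refinement on which they agree. Setting $\psi \defl \phi - f p\colon X \to B$, one has $H^0(\psi) = 0$ by construction of $f$, and $H^i(\psi) = 0$ for $i \ne 0$ because $B$ is in the heart, so $H(\psi) = 0$. The long exact sequence in cohomology for the fibre triangle $F_{\psi} \to X \to B$ then forces $F_{\psi} \to X$ to be an $H$-epimorphism, and the composite $F_{\psi} \to X \to A$ supplies the required common refinement, since $\phi$ and $fp$ restrict to the same morphism on $F_{\psi}$ (their difference $\psi$ vanishes there by definition of the fibre). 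This yields the identification $\Hom_{\ca{M}(H)}([A],[B]) \cong \ca{K}(A,B)$ and completes fullness. The hardest aspect is controlling sections of the sheafification; the crucial structural input is the compatibility of the standard $t$-structure on $\mathrm{D}^b(\ca{K})$ with the class of $H$-epimorphisms, which lets cohomological vanishing be upgraded to actual $H$-epi covers.
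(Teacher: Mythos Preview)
Your arguments for faithfulness and exactness are correct and match the paper's in spirit. The issue is with fullness.

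You identify $\Hom_{\ca{M}(H)}([A],[B])$ with $L\ca{T}(-,B)(A)$ and then represent elements by pairs $(p,\phi)$ with $\phi k=0$. That representation describes the \emph{single} plus construction $L^{+}\ca{T}(-,B)(A)$, not the sheafification $L=L^{+}L^{+}$. The presheaf $\ca{T}(-,B)$ is not separated for $B$ in the heart: any nonzero class in $\mathrm{Ext}^{1}_{\ca{K}}(B',B)$ gives a nonzero $\phi\colon B'[-1]\to B$ with $H(\phi)=0$, whose fibre provides an $H$-epi cover on which $\phi$ vanishes. So a second plus is genuinely needed, and your argument as written only establishes $L^{+}\ca{T}(-,B)(A)\cong\ca{K}(A,B)$. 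The gap is not fatal: your key observation (that $H(\psi)=0$ forces the fibre of $\psi$ to be an $H$-epi cover killing $\psi$) applies equally with an arbitrary $X$ in place of $A$, and shows that the natural map $L^{+}\ca{T}(-,B)\to G$, $G(X)=\ca{K}(H^{0}_{\ca{K}}X,B)$, is a monomorphism of presheaves. Since $G$ is a sheaf (by right exactness of $H^{0}_{\ca{K}}$ on fibre triangles of $H$-epis) and $L^{+}$ preserves monomorphisms, this yields $L\ca{T}(-,B)\hookrightarrow G$, which combined with your section $\ca{K}(A,B)\to L\ca{T}(-,B)(A)$ finishes the job. You should make this step explicit.

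The paper avoids all of this by a more conceptual route: the total cohomology $H_{\ast}\colon\mathrm{D}^{b}(\ca{K})\to(\mathbb{Z},\varepsilon)\mbox{-}\mathrm{gr}(\ca{K})$ is a K\"{u}nneth functor of the same strength as $H$ (since $w_{\ast}$ is conservative), so the universal property of $\ca{M}(H)$ produces a faithful exact $G\colon\ca{M}(H)\to(\mathbb{Z},\varepsilon)\mbox{-}\mathrm{gr}(\ca{K})$ with $G\circ F\cong\mathrm{incl}_{0}$. Since $\mathrm{incl}_{0}$ is fully faithful and $G$ is faithful, a two-out-of-three argument on hom-sets gives that $G$ is bijective on the relevant homs, hence so is $F$. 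This bypasses any computation with the plus construction; your approach, once patched, has the virtue of being self-contained and making the role of the $t$-structure explicit, but the paper's argument is shorter and showcases the universal property.
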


\begin{proof}
 The diagram
 \[
 \xymatrix{ \mathrm{D}^{b}(\ca{K}) \ar[d]_{\mathrm{D}^{b}(w)} \ar[r]^-{H_{\ast}} & (\mathbb{Z},\varepsilon)\mbox{-}\mathrm{gr}(\ca{K}) \ar[d]^{w_{\ast}} \\ 
 \mathrm{D}^{b}(\Vect_K)  \ar[r]_-{H_{\ast}} &  (\mathbb{Z},\varepsilon)\mbox{-}\Vect_K }
 \]
 commutes up to symmetric monoidal isomorphism since $w$ is exact. Since $w_{\ast}$ is also conservative, it follows that the homological functor $H_{\ast} \colon D^{b}(\ca{K}) \rightarrow (\mathbb{Z},\varepsilon)\mbox{-}\mathrm{gr}(\ca{K})$ is symmetric strong monoidal and of the same strength as $H$. From the universal property we get the factorization depicted on the right below
 \[
 \xymatrix@!C=20pt{ & \mathrm{D}^{b}(\ca{K}) \ar[rd]^(0.6){H_{\ast}} \\
  \ca{K} \ar[ru]^{\mathrm{incl}_0} \ar[rr]_-{\mathrm{incl}_0} && (\mathbb{Z},\varepsilon)\mbox{-}\mathrm{gr}(\ca{K})} \quad
  \xymatrix@!C=20pt{  \mathrm{D}^{b}(\ca{K}) \ar[rd]_(0.4){H_{\ast}} \ar[rr]^-{[-]} && \ca{M}(H)  \ar[ld]^(0.4){G} \\ & (\mathbb{Z},\varepsilon)\mbox{-}\mathrm{gr}(\ca{K})}
 \]
 and by combining this with the triangle on the left above, we find that the composite of $G$ and $[-] \circ \mathrm{incl}_0$ is naturally isomorphic to the exact, full, and faithful functor $\mathrm{incl}_0$. Since $G$ is exact and faitfhul, it follows that $[-] \circ \mathrm{incl}_0$ is exact. It also follows that $G$ is full on the image of $[-] \circ \mathrm{incl}_0$, so $G$ induces a bijection on the relevant hom-sets. This implies the remaining claim that $[-] \circ \mathrm{incl}_0$ is full.
\end{proof}

\begin{rmk} 
The above corollary suggests that one can use the category $\ca{M}(H)$ to construct a candidate for the Tannakian category coming from a hypothetical motivic $t$-structure by considering the smallest graded-Tannakian subcategory of $\ca{M}(H)$ generated by the objects which ``should'' lie in the heart of the $t$-structure.
\end{rmk}

 \begin{rmk}
 From Theorem~\ref{thm:day_universal} we also get a corresponding universal property for symmetric lax monoidal functors, though in this case, the functor need not preserve finitely presentable objects (since it might not preserve objects with duals). Nevertheless, we get an induced symmetric lax monoidal left adjoint on ind-objects if the original homological functor sends sequences in $\Sigma_H$ to cokernel diagrams by the universal property.
 \end{rmk}

\section{Applications to Grothendieck's categories of motives}\label{section:applications}

\subsection{The basic Tannakian factorization}

 Let $\ca{A}$ be a small additive rigid symmetric monoidal category with finite direct sums. Then the category of $\Chb(\ca{A})$ of bounded chain complexes in $\ca{A}$ is a rigid symmetric monoidal differential graded category. This means that the tensor product is a differential graded functor, so the homotopy category $\Kb(\ca{A})$ is a rigid $\otimes$-triangulated category.
 
 Now suppose that $\ca{K}$ is a rigid symmetric monoidal category which is also semi-simple abelian, for example, a category of graded vector spaces of finite total dimension. In this case, taking homology gives a symmetric monoidal equivalence $H_{\ast} \colon \Kb(\ca{K}) \rightarrow (\mathbb{Z},\varepsilon)\mbox{-}\mathrm{gr}(\ca{K})$ where $\varepsilon(n)=(-1)^{n}$. The equivalence is given by taking homology, so it turns triangulated functors into homological functors.
 
 If $H \colon \ca{A} \rightarrow \ca{K}$ is a symmetric strong monoidal additive functor, we get a triangulated functor $\Kb(H) \colon \Kb(\ca{A}) \rightarrow \Kb(\ca{K})$ which is symmetric strong monoidal. The resulting composite
 \[
\xymatrix{ \Kb(\ca{A}) \ar[r]^-{\Kb(H)} & \Kb(\ca{\ca{K}}) \ar[r]^-{H_{\ast}}_-{\simeq} & (\mathbb{Z},\varepsilon)\mbox{-}\mathrm{gr}(\ca{K}) }
 \]
 is thus a K{\"u}nneth functor. If $\ca{K}$ is itself the category of $(\mathbb{Z},\varepsilon)$-graded vector spaces, then we can apply Theorem~\ref{thm:m} and we obtain a diagram
 \[
\xymatrix@C=30pt{ \ca{A} \ar[r]^-{\mathrm{incl_0}} \ar[d]_H & \Kb(\ca{A}) \ar[d]_{\Kb(H)} \ar[r]^{[-]} & \ca{M}\bigl(H^{\ast}\Kb(H)\bigr) \ar[d]^{ \overline{H_{\ast}\Kb(H)}} \\
(\mathbb{Z},\varepsilon)\mbox{-}\Vect_K \ar[r]_-{\mathrm{incl_0}} &
\Kb\bigl((\mathbb{Z},\varepsilon)\mbox{-}\Vect_K\bigr) \ar[r]_-{H_{\ast}}^-{\simeq} & (\mathbb{Z},\varepsilon) \mbox{-}\mathrm{gr}\bigl((\mathbb{Z},\varepsilon)\mbox{-}\Vect_K \bigr)}
 \]
 which commutes up to symmetric monoidal isomorphism, where $\ca{M}\bigl(H_{\ast}\Kb(H)\bigr)$ is graded-Tannakian and $\overline{H_{\ast}\Kb(H)}$ is a graded fiber functor.

 We write $\ca{M}_H$ for the smallest full subcategory of $\ca{M}\bigl(H^{\ast}\Kb(H)\bigr)$ which contains the image of $[-] \circ \mathrm{incl}_0$ and is closed under finite direct sums, kernels, cokernels, finite tensor products and duals. We simply write $[-] \colon\ca{A} \rightarrow \ca{M}_H$ for the composite $[-] \circ \mathrm{incl}_0$.
 
  By commutativity of the above diagram (up to isomorphism), the restriction of $\overline{H_{\ast} \Kb(H)}$ to $\ca{M}_H$ factors through $H^{\ast} \circ \mathrm{incl}_0$ (since graded $K$-vector spaces concentrated in degree $\{0\} \times \mathbb{Z}$ are stable under the required constructions). We denote this functor by $\bar{H} \colon \ca{M}_H \rightarrow (\mathbb{Z},\varepsilon)\mbox{-}\Vect_K$. By construction, there exists a natural symmetric monoidal isomorphism $\bar{H}[-]\cong H$.

 \begin{dfn}\label{dfn:tf}
 Let $\ca{A}$ be a small additive rigid symmetric monoidal category with finite direct sums. Let $K$ be a field and let $H \colon \ca{A} \rightarrow (\mathbb{Z},\varepsilon)\mbox{-}\Vect_K$ be a symmetric strong monoidal additive functor. The factorization
 \[
 \xymatrix@C=10pt{ \ca{A} \ar[rd]_(0.45){H} \ar[rr]^-{[-]} & \ar@{}[d]|(0.4){\cong}& \ar[ld]^(0.45){\bar{H}} \ca{M}_H  \\ 
 & (\mathbb{Z},\varepsilon)\mbox{-}\Vect_K }
 \]
 of symmetric strong monoidal functors constructed above is called the \emph{basic Tannakian factorization} of $H$. 
 \end{dfn}

\begin{thm}\label{thm:tannakian_factorization}
 The basic Tannakian factorization has the following properties.
 \begin{enumerate}
 \item[(i)] The category $\ca{M}_H$ is graded-Tannakian and $\bar{H}$ is a graded fiber functor.
 
 \item[(ii)] Let $H^{\prime} \colon \ca{A} \rightarrow (\mathbb{Z},\varepsilon)\mbox{-}\Vect_{K^{\prime}} $ be a symmetric strong monoidal additive functor and suppose there exists a common field extension $K^{\prime \prime}$ of $K$ and $K^{\prime}$ such that the diagram
 \[
\xymatrix{\ca{A} \ar@{}[rd]|{\cong} \ar[r]^-{H} \ar[d]_{H^{\prime}} & (\mathbb{Z},\varepsilon)\mbox{-}\Vect_{K} \ar[d]^{K^{\prime\prime} \ten{K} -} \\  (\mathbb{Z},\varepsilon)\mbox{-}\Vect_{K^{\prime}} \ar[r]_-{K^{\prime} \ten{K} -} & (\mathbb{Z},\varepsilon)\mbox{-}\Vect_{K^{\prime\prime}} } 
 \]
 commutes up to natural isomorphism. Then $\ca{M}_H=\ca{M}_{H^{\prime}}$ and there exists a graded fiber functor $\bar{H^{\prime}} \colon \ca{M}_H \rightarrow (\mathbb{Z},\varepsilon)\mbox{-}\Vect_{K^{\prime}}$ such that $H^{\prime} \cong \bar{H^{\prime}}[-]$.
 \item[(iii)] Let $\ca{B}$ be an additive rigid symmetric monoidal category with finite direct sums and $H^{\prime} \colon \ca{B} \rightarrow (\mathbb{Z},\varepsilon)\mbox{-}\Vect_K$ a symmetric strong monoidal functor. If $F \colon \ca{A} \rightarrow \ca{B}$ is an additive symmetric strong monoidal functor and there exists a symmetric monoidal isomorphism $H^{\prime} F \cong H$, then there exists a symmetric strong monoidal faithful exact functor $\bar{F} \colon \ca{M}_H \rightarrow \ca{M}_{H^{\prime}}$, together with symmetric monoidal isomorphisms $\bar{H^{\prime}} \bar{F} \cong \bar{H}$ and $\bar{F}[-]\cong [F-]$.
 \item[(iv)] If the category $\ca{A}$ is semi-simple abelian and the functor $H$ is faithful, then $[-]\colon \ca{A} \rightarrow \ca{M}_H$ is an equivalence of categories.
 \end{enumerate}
\end{thm}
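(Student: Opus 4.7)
Parts (i)–(iii) follow rather directly from the constructions and universal properties already established. For (i), Theorem~\ref{thm:m} gives the ambient rigid abelian symmetric monoidal category $\ca{M}(H_{\ast}\Kb(H))$ with its faithful exact symmetric strong monoidal graded fiber functor $\overline{H_{\ast}\Kb(H)}$; the subcategory $\ca{M}_H$ is by definition closed under finite direct sums, kernels, cokernels, tensor products, and duals, so it inherits the rigid abelian symmetric monoidal structure, and the restriction of $\overline{H_{\ast}\Kb(H)}$ to $\ca{M}_H$ lands in degree zero, where it agrees with $\bar{H}$. For (ii), the hypothesis lifts termwise to give a natural isomorphism $K^{\prime\prime}\ten{K} H_{\ast}\Kb(H) \cong K^{\prime\prime}\ten{K^{\prime}} H^{\prime}_{\ast}\Kb(H^{\prime})$, and Corollary~\ref{cor:natural_iso} then yields $\ca{M}(H_{\ast}\Kb(H)) = \ca{M}(H^{\prime}_{\ast}\Kb(H^{\prime}))$ together with a graded fiber functor realizing $H^{\prime}$; restricting to $\ca{M}_H$ and to degree zero as in (i) gives $\ca{M}_{H^{\prime}}=\ca{M}_H$ and the desired $\bar{H^{\prime}}$.

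For (iii), the composite $[-]_{\ca{M}_{H^{\prime}}} \circ \Kb(F) \colon \Kb(\ca{A}) \to \ca{M}(H^{\prime}_{\ast}\Kb(H^{\prime}))$ is a K{\"u}nneth functor, and the chosen symmetric monoidal isomorphism $H^{\prime}F \cong H$ (applied degreewise and then via $H_{\ast}$) gives $\bar{H^{\prime}} \circ [-]_{\ca{M}_{H^{\prime}}} \circ \Kb(F) \cong H_{\ast}\Kb(H)$, so by faithfulness of $\bar{H^{\prime}}$ this composite has the same strength as $H_{\ast}\Kb(H)$. Theorem~\ref{thm:universal_property} thus produces a faithful exact symmetric strong monoidal functor $\bar{F}_0 \colon \ca{M}(H_{\ast}\Kb(H)) \to \ca{M}(H^{\prime}_{\ast}\Kb(H^{\prime}))$ with $\bar{F}_0 \circ [-] \cong [-] \circ \Kb(F)$. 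Such a functor preserves all the closure operations defining $\ca{M}_H$ and $\ca{M}_{H^{\prime}}$ and sends generators $[\mathrm{incl}_0 A]$ to $[\mathrm{incl}_0 FA]$, so it restricts to the required $\bar{F}$. The identification $\bar{F}[-]\cong[F-]$ is immediate, and $\bar{H^{\prime}}\bar{F} \cong \bar{H}$ follows from the uniqueness in Theorem~\ref{thm:universal_property} applied to the two right-exact symmetric strong monoidal functors $\bar{H^{\prime}}\bar{F}_0$ and (a suitable extension of) $\bar{H}$ on $\ca{M}(H_{\ast}\Kb(H))$, whose composites with $[-]$ both recover $H_{\ast}\Kb(H)$.

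Part (iv) is the main obstacle. The plan is to build a faithful exact symmetric strong monoidal retract $\tilde{G} \colon \ca{M}_H \to \ca{A}$ of $[-]$ by running the machinery once more with $H$ replaced by $\ca{A}$ itself. Semi-simplicity of $\ca{A}$ implies that every bounded complex in $\ca{A}$ is homotopy-equivalent to the direct sum of its homologies placed in their degrees, whence taking homology defines a \emph{faithful} symmetric strong monoidal homological functor $H_{\ca{A}} \colon \Kb(\ca{A}) \to (\mathbb{Z},\varepsilon)\mbox{-}\mathrm{gr}(\ca{A})$ (faithfulness because morphisms in $\Kb(\ca{A})$ are determined by their action on homology; the K{\"u}nneth isomorphism holds since $\ca{A}$, being semi-simple, is hereditary). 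The target is rigid abelian with exact tensor because $\ca{A}$ is. Since $H$ is faithful, so is $H_{\ast}\Kb(H)$, and the two K{\"u}nneth functors $H_{\ca{A}}$ and $H_{\ast}\Kb(H)$ on $\Kb(\ca{A})$ have the same (trivial) strength. Theorem~\ref{thm:universal_property} then produces a faithful exact symmetric strong monoidal $G \colon \ca{M}(H_{\ast}\Kb(H)) \to (\mathbb{Z},\varepsilon)\mbox{-}\mathrm{gr}(\ca{A})$ with $G[-] \cong H_{\ca{A}}$. The image of $[-] \circ \mathrm{incl}_0$ sits in degree $0$, which is closed under all operations used to build $\ca{M}_H$, so $G$ restricts to the desired faithful exact symmetric strong monoidal functor $\tilde{G} \colon \ca{M}_H \to \ca{A}$ satisfying $\tilde{G}[-] \cong \id_{\ca{A}}$.

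From $\tilde{G}$ the remaining assertions are formal. Faithfulness of $[-]$ is immediate from $\bar{H}[-] \cong H$ with $H$ and $\bar{H}$ faithful. For fullness, given $\alpha \in \ca{M}_H([A],[B])$, set $f = \tilde{G}\alpha$; then $\tilde{G}[f] = f = \tilde{G}\alpha$, and faithfulness of $\tilde{G}$ forces $[f]=\alpha$. For essential surjectivity, semi-simplicity of $\ca{A}$ lets every morphism $f \colon A \to B$ be written $f = \iota \circ \pi$ with $\pi \colon A \to \mathrm{im}(f)$ a split epi (with kernel $\ker f$) and $\iota \colon \mathrm{im}(f) \to B$ a split mono (with cokernel $\coker f$); the additive symmetric strong monoidal $[-]$ preserves these splittings, so $\ker[f] \cong [\ker f]$ and $\coker[f] \cong [\coker f]$, and hence the essential image of $[-]$ is closed under kernels and cokernels in addition to finite direct sums, tensor products, and duals, so it equals $\ca{M}_H$. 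The only substantive new ingredient is the auxiliary K{\"u}nneth functor $H_{\ca{A}}$; once semi-simplicity is used to ensure that it is faithful and lands in a rigid abelian target, everything else is formal from the universal property.
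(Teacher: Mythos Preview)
Your arguments for (i)--(iii) are correct and follow the same path as the paper: construction plus Theorem~\ref{thm:m} for (i), $2$-functoriality of $\Kb$ together with Corollary~\ref{cor:natural_iso} for (ii), and the universal property of Theorem~\ref{thm:universal_property} for (iii).

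For (iv) your argument is correct but genuinely different from the paper's. The paper unwinds the construction: since $\ca{A}$ is semi-simple and $H$ faithful, $\Kb(\ca{A})\simeq(\mathbb{Z},\varepsilon)\mbox{-}\mathrm{gr}(\ca{A})$ is semi-simple and $H_{\ast}\Kb(H)$ \emph{detects} epimorphisms; hence every $H_{\ast}\Kb(H)$-epimorphism is split, the sequences in $\Sigma_{H_{\ast}\Kb(H)}$ are split exact, and by Proposition~\ref{prop:sheaf_characterization} the sheaf category equals the presheaf category. Thus $[-]$ is literally the Yoneda embedding, full and faithful on the nose, and one only has to observe (as you also do) that the image of $\mathrm{incl}_0$ is closed under kernels, cokernels, sums, tensors and duals because every morphism in $\ca{A}$ splits. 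Your route instead feeds the auxiliary K\"unneth functor $H_{\ca{A}}\colon \Kb(\ca{A})\to(\mathbb{Z},\varepsilon)\mbox{-}\mathrm{gr}(\ca{A})$ (which is even an equivalence here) into Theorem~\ref{thm:universal_property} to manufacture a faithful exact retraction $\tilde{G}$ of $[-]$, and then reads off fullness and essential surjectivity formally. This is exactly the mechanism behind Corollary~\ref{cor:derived}, specialised to the semi-simple case and supplemented by your essential-surjectivity step. The paper's proof is shorter and more transparent about \emph{why} nothing happens (the topology is trivial); your proof has the virtue of staying entirely at the level of the universal property and never touching the sheaf description, which makes it more portable.
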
 
 
 \begin{proof}
 Part~(i) is immediate from the construction. To see~(ii), note that $\Kb$ is 2-functorial, so the diagram
 \[
\xymatrix@C=50pt{ \Kb(\ca{A}) \ar@{}[rd]|{\cong} \ar[r]^-{\Kb(H)} \ar[d]_{\Kb(H^{\prime})} & \Kb\bigl( (\mathbb{Z},\varepsilon)\mbox{-}\Vect_{K} \bigr) \ar[d]^{\Kb(K^{\prime\prime} \ten{K} -)} \\  \Kb\bigl( (\mathbb{Z},\varepsilon)\mbox{-}\Vect_{K^{\prime}} \bigr) \ar[r]_-{\Kb(K^{\prime} \ten{K} -)} & \Kb\bigl( (\mathbb{Z},\varepsilon)\mbox{-}\Vect_{K^{\prime\prime}} \bigr) }
 \]
 commutes up to natural isomorphism. Since taking homology commutes with the exact functors $K^{\prime\prime} \ten{K} -$ and $K^{\prime\prime} \ten{K^{\prime}} -$, we are in the situation of Corollary~\ref{cor:natural_iso}, which proves Part~(ii).

 Part~(iii) is proved similarly. From the 2-functoriality of $\Kb$ we get a symmetric monoidal isomorphism $\Kb(H^{\prime}) \Kb(F) \cong \Kb(H)$, so $\Kb(F)$ sends $H_{\ast}\Kb(H)$-epimorphisms to $H_{\ast}\Kb(H)$-epimorphisms. Since $\Kb(F)$ is triangulated, $[-] \circ \Kb(F)$ is homological and we get a symmetric strong monoidal functor
 \[
 \widetilde{F} \colon \ca{M}\bigl(H_{\ast}\Kb(H)\bigr) \rightarrow \ca{M}\bigl(H_{\ast}\Kb(H^{\prime})\bigr)
 \]
 and a symmetric monoidal isomorphism $\widetilde{F}[-] \cong [-] \circ \Kb(F)$ by the universal property (see Theorem~\ref{thm:universal_property}). The essential uniqueness part of the universal property shows that there exists a symmetric monoidal isomorphism $\overline{H_{\ast}\Kb(H^{\prime})} \widetilde{F} \cong \overline{H_{\ast}\Kb(H)}$, so $\widetilde{F}$ is faithful and exact, and the claim follows by restricting $\widetilde{F}$ to the full subcategory $\ca{M}_H$.

 Finally, if $\ca{A}$ is semi-simple abelian, then $H$ is exact. Moreover, $\Kb(\ca{A}) \simeq (\mathbb{Z},\varepsilon)\mbox{-}\mathrm{gr}(\ca{A})$ is semi-simple as well and $\Kb(H) \cong (\mathbb{Z},\varepsilon)\mbox{-}\mathrm{gr}(H)$ is faithful and exact. Thus the functor
 \[
 H_{\ast} \Kb(H) \colon \Kb(\ca{A}) \rightarrow (\mathbb{Z},\varepsilon) \mbox{-}\mathrm{gr}\bigl((\mathbb{Z},\varepsilon)\mbox{-}\Vect_K \bigr)
 \]
 \emph{detects} epimorphisms. Since all epimorphisms in a semi-simple abelian category are split, this means that the $ H_{\ast} \Kb(H)$-epimorphisms in $\Kb(\ca{A})$ are precisely the split epimorphisms. Thus the sequences in $\Sigma_{H_{\ast} \Kb(H)}$ are the split exact sequences, so $\Sh_{H_{\ast} \Kb(H)}\bigl(\Kb(\ca{A}) \bigr)$ is equal to the presheaf category $[\Kb(\ca{A})^{\op},\Ab ]$ and $[-]=Y$ is the Yoneda embedding (see Proposition~\ref{prop:sheaf_characterization}). Since the image of the full and faithful functor $[-] \circ \mathrm{incl_0} \colon \ca{A} \rightarrow \ca{M}_H$ is already closed under the desired operations, the claim follows.
\end{proof}

 By unravelling the construction, we get the following way of constructing morphisms between objects in the image of $[-]$.
 
 \begin{dfn}\label{dfn:h}
 Let $A$, $B$ be objects of $\ca{A}$ and consider them as objects of $\Kb(\ca{A})$ concentrated in degree zero. Consider a small diagram $C\colon \ca{I} \rightarrow \Kb(\ca{A})$, a cone $(f_i \colon A \rightarrow C_i)$ and a cone $(w_i \colon B \rightarrow C_i)$. The morphisms
 \[
 \xymatrix@C=45pt{HA \ar[r]^-{\cong} & H_\ast \Kb(H)(A) \ar[r]^{H_\ast \Kb(H)(f_i)} & H_\ast \Kb(H)(C_i) }
 \]
 of graded $K$-vector spaces induce morphisms
 \[
\widetilde{Hf} \colon HA \rightarrow \colim_{\ca{I}}\bigl(H_\ast \Kb(H)(C_i) \bigr) \quad \text{and} \quad \widetilde{Hw} \colon HB \rightarrow \colim_{\ca{I}}\bigl(H_\ast \Kb(H)(C_i) \bigr) 
 \]
 and we call the pair $\bigl((f_i),(w_i)\bigr)$ a \emph{homological morphism} from $A$ to $B$ if $\widetilde{Hw}$ is an isomorphism. We denote homological morphisms by $(f,w) \colon A \rightsquigarrow B$, leaving the diagram implicit. We write $\mathrm{H}(A,B)$ for the set of homological morphisms from $A$ to $B$.
 \end{dfn}

\begin{prop}\label{prop:homological_morphism}
 There is a surjective function $[-] \colon H(A,B) \rightarrow \ca{M}_H([A],[B])$ such that for each homolgical morphism $(f,w) \colon A \rightsquigarrow B$ as in Definition~\ref{dfn:h},  the diagram
 \[
 \xymatrix{HA \ar[d]_\cong \ar[r]^-{\widetilde{Hf}} & \colim_{i \in \ca{I}}\bigl(H_\ast \Kb(H)(C_i) \bigr) \ar[r]^-{\widetilde{Hw}^{-1}}  & HB \ar[d]^{\cong}\\ 
 \bar{H}[A] \ar[rr]_-{\bar{H}[(f,w)]} & &\bar{H} B}
 \]
 of graded $K$-vector spaces is commutative.
\end{prop}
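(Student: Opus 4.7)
The plan is to construct the function $[-] \colon H(A,B) \to \ca{M}_H([A], [B])$ together with the asserted commutative diagram by reading them off the sheaf category $\sh_H\bigl(\Kb(\ca{A})\bigr)$, and then to prove surjectivity using the Pstr\k{a}gowski description of the sheafification. For the construction, given a homological morphism $(f, w) \colon A \rightsquigarrow B$ with underlying diagram $C \colon \ca{I} \to \Kb(\ca{A})$, applying $[-] = LY \colon \Kb(\ca{A}) \to \sh_H\bigl(\Kb(\ca{A})\bigr)$ yields cones $([f_i])$ and $([w_i])$ on the diagram $[C]$ in $\sh_H$, and hence canonical comparison morphisms $[\widetilde{f}] \colon [A] \to \colim_i [C_i]$ and $[\widetilde{w}] \colon [B] \to \colim_i [C_i]$. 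By Proposition~\ref{prop:faithful_exact} and Corollary~\ref{cor:lfp} the functor $\bar{H}$ extends to a faithful, exact, cocontinuous functor on $\sh_H$; together with the natural isomorphism $\bar{H}[X] \cong H_\ast\Kb(H)(X)$ from Theorem~\ref{thm:m}, cocontinuity identifies $\bar{H}([\widetilde{w}])$ with $\widetilde{Hw}$, which is an isomorphism by hypothesis. Since a faithful exact functor between abelian categories reflects isomorphisms, $[\widetilde{w}]$ is invertible in $\sh_H$, and I define $[(f,w)] \defl [\widetilde{w}]^{-1} \circ [\widetilde{f}]$; this is a morphism in $\ca{M}_H$ since $\ca{M}_H$ is a full subcategory of $\sh_H$ containing $[A]$ and $[B]$. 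Applying $\bar{H}$ to this construction yields precisely the asserted commutative diagram.

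For surjectivity, I will represent an arbitrary $g \in \ca{M}_H([A], [B]) = \sh_H([A], [B]) = LY(B)(A)$ via the Pstr\k{a}gowski topology. Since this topology is generated as a singleton coverage by $H$-epimorphisms, elements of the single plus $Y(B)^+(A)$ are represented by pairs $(p \colon C \to A, g' \colon C \to B)$ with $p$ an $H$-epimorphism and $g' \circ f = 0$, where $f \colon F \to C$ is the fiber of $p$. Applying the cohomological functor $\ca{T}(-, B)$ to the distinguished triangle $\xymatrix{F \ar[r]^-f & C \ar[r]^p & A \ar[r] & F[1]}$ then produces a factorization $g' = \phi \circ p$ for some $\phi \colon A \to B$ in $\Kb(\ca{A})$, and the pairs $(p, g')$ and $(\id_A, \phi)$ define the same element of $Y(B)^+(A)$. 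The one-object homological morphism with $C_0 = B$, $f_0 = \phi$, and $w_0 = \id_B$ is then sent to $[\phi] = g$ by the construction of the previous paragraph.

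The hard part will be justifying that the full sheafification $LY(B) = (Y(B)^+)^+$ does not introduce elements beyond those coming from a single plus construction. My plan is to show that $Y(B)^+$ is already a sheaf by exploiting the triangulated structure: for an $H$-epimorphism $p \colon C \to A$ the difference $\pi_1 - \pi_2 \colon C \times_A^h C \to C$ of the two projections of the homotopy pullback satisfies $p \circ (\pi_1 - \pi_2) = 0$, so factors through $f \colon F \to C$, and therefore any representative with $g' \circ f = 0$ automatically satisfies the descent condition $g' \pi_1 = g' \pi_2$. Combined with stability of $H$-epimorphisms under composition and homotopy pullback (Proposition~\ref{prop:pstragowski}), this collapses iterated cover data to single-cover representatives and yields the required surjectivity.
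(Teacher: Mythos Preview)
Your construction of the map $[-]$ and the commutative diagram is correct and essentially identical to the paper's.

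The surjectivity argument, however, has a genuine gap. Your plan is to show that $(YB)^+$ is already a sheaf, so that every element of $LY(B)(A)$ is represented by a single morphism $\phi \colon A \to B$ in $\Kb(\ca{A})$. But observe what this would entail: together with your (correct) observation that $YB \to (YB)^+$ is pointwise surjective, it would force $YB \to LYB$ to be pointwise surjective, i.e.\ the functor $[-] \colon \Kb(\ca{A}) \to \ca{M}\bigl(H_\ast\Kb(H)\bigr)$ would be \emph{full}. Restricting to complexes concentrated in degree zero then yields fullness of $[-] \colon \ca{A} \to \ca{M}_H$, which one does not expect unless $\ca{A}$ is already semi-simple (cf.\ Theorem~\ref{thm:tannakian_factorization}(iv) and Theorem~\ref{thm:properties}(i)); the whole point of passing to $\ca{M}_H$ is precisely to acquire morphisms---K\"unneth projectors, the $[\nu_i]$ of \S\ref{section:standard}---that need not lift to $\ca{A}$. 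Your final paragraph only re-derives the gluing property for $YB$ itself (a section on a single cover with $g'f=0$ descends), not for $(YB)^+$, and the appeal to ``collapsing iterated cover data'' does not address this obstruction.

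The paper sidesteps the issue by never reducing to a single morphism. Given $g \colon [A] \to [B]$, one takes its adjunct $f \colon YA \to LYB$, writes the presheaf $LYB$ itself as a colimit $\colim_{\ca{I}} YC_i$ of representables, and uses the unit $\eta_B \colon YB \to LYB$ for $\tilde w$; then $L(\tilde w)$ is invertible and $L(\tilde w)^{-1} L(f) = g$ directly. Projectivity of $YA$ and $YB$ furnishes lifts to individual $YC_i$, but the diagram $\ca{I}$ stays large---there is no need, and in general no possibility, to shrink it to a single object.
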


\begin{proof}
 The cones $A \rightarrow C_i$ and $w_i \colon B \rightarrow C_i$ induce morphisms $\tilde{f} \colon YA \rightarrow \colim_i YC_i$ and $ \tilde{w} \colon YB \rightarrow \colim_{\ca{I}} YC_i$ of presheaves. Moreover, $L(\tilde{w})$ is invertible by assumption on $w$, and the morphism $[f,w] \defl L(\tilde{w})^{-1} L(f)$ makes the above diagram commutative.
 
 To see that this assignment is surjective, note that to give a morphism $[A]=LYA \rightarrow LYB=[B]$ amounts to giving a morphism $f \colon YA \rightarrow LYB$ by universal property of the associated sheaf functor. Since every presheaf is a colimit of representable presheaves, we have $LYB \cong \colim_{\ca{I}} YC_i$ for some diagram $C \colon \ca{I} \rightarrow \Kb(H)$. Since both $YA$ and $YB$ are projective, we can pick lifts $f_i$ of $f$ respectively $w_i$ of $w$ along the epimorphism $\oplus_{i \in \ca{I}} YC_i \rightarrow \colim_{\ca{I}} YC_i$ to obtain the desired homological morphism $(f,w) \colon A \rightsquigarrow B$.
\end{proof}

 We now apply the basic Tannakian factorization in the case where $\ca{A}$ is a category of Chow motives and $H$ is a Weil cohomology theory. 
 
\subsection{Tannakian categories associated to Weil cohomology theories}

 Let $k$ be a field and $\mathrm{SmProj}_k$ the category of smooth projective varieties over $k$. Let $K$ be a field of characteristic zero and $H^{\ast} \colon \mathrm{SmProj}_k \rightarrow (\mathbb{Z},\varepsilon)\mbox{-}\Vect_K$ a Weil cohomology theory. We write $\mathrm{Mot}_H(k)$ for the category of Chow motives modulo homological equivalence. The category $\mathrm{Mot}_H(k)$ is a rigid additive symmetric monoidal category and has finite direct sums. The Weil cohomology theory $H$ induces an additive symmetric strong monoidal functor $H \colon \mathrm{Mot}_H(k) \rightarrow (\mathbb{Z},\varepsilon)\mbox{-}\Vect_K$, which is faithful by our choice of equivalence relation. Let
 \[
 \xymatrix@C=10pt{ \mathrm{Mot_H}(k) \ar[rd]_(0.45){H} \ar[rr]^-{[-]} & \ar@{}[d]|(0.4){\cong}& \ar[ld]^(0.45){\bar{H}} \ca{M}_H(k)  \\ 
 & (\mathbb{Z},\varepsilon)\mbox{-}\Vect_K }
 \]
 be the basic Tannakian factorization (see Definition~\ref{dfn:tf}). We call homological morphisms in these categories \emph{homological cycles}. These categories exist unconditionally, and we get the following result.
 
 \begin{thm}\label{thm:properties}
 The category $\ca{M}_{H}$ and the functors $[-]$, $\bar{H}$ have the following properties.
 \begin{enumerate}
 \item[(i)]  If the standard conjecture D holds for $H$, then $[-] \colon \mathrm{Mot_H}(k) \rightarrow \ca{M}_H(k)$ is an equivalence.
  \item[(ii)] If $H^{\prime} \colon \mathrm{SmProj}_k \rightarrow (\mathbb{Z},\varepsilon)\mbox{-} \Vect_{K^{\prime}}$ is another Weil cohomology theory and there exists a common field extension $K^{\prime \prime}$ of $K$ and $K^{\prime}$ such that the diagram
  \[
  \xymatrix{\mathrm{Mot}_H(k) \ar@{}[rd]|{\cong} \ar[r]^-{H} \ar[d]_{H^{\prime}} & (\mathbb{Z},\varepsilon)\mbox{-}\Vect_{K} \ar[d]^{K^{\prime\prime} \ten{K} -} \\  (\mathbb{Z},\varepsilon)\mbox{-}\Vect_{K^{\prime}} \ar[r]_-{K^{\prime} \ten{K} -} & (\mathbb{Z},\varepsilon)\mbox{-}\Vect_{K^{\prime\prime}} } 
  \]
  commutes up to natural isomorphism (which amounts to compatibility with transfers, cup products, and the respective cycle maps), then there exists a graded fiber functor $\bar{H^{\prime}} \colon \ca{M}_H(k) \rightarrow (\mathbb{Z},\varepsilon)\mbox{-}\Vect_{K^{\prime}}$ and a symmetric monoidal isomorphism $H^{\prime} \cong \bar{H^{\prime}}[-]$.
 \item[(iii)] If $k$ has characteristic zero, $H$ is classical (de Rham, $\ell$-adic, respecitvely Betti cohomology if $k \subseteq \mathbb{C}$), and $\ca{M}_k$ is Andr{\'e}'s Tannakian category defined via motivated cycles \cite[\S 4]{ANDRE}, then there exists a (non symmetric) strong monoidal functor
  \[
  \ca{M}_H(k) \rightarrow \ca{M}_k
  \]
  which is faithful and exact and commutes with the respective realizations up to (non-symmetric) natural monoidal isomorphism.
\end{enumerate}   
 \end{thm}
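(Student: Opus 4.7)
Parts (i) and (ii) follow near-immediately from the corresponding parts of Theorem~\ref{thm:tannakian_factorization}. For (i), the standard conjecture $D$ asserts that $H$-equivalence on algebraic cycles coincides with numerical equivalence, so $\mathrm{Mot}_H(k)$ coincides with the category of numerical motives. By Jannsen's semi-simplicity theorem, the latter is semi-simple abelian, and the functor $H$ on $\mathrm{Mot}_H(k)$ is faithful by construction (cycles are taken modulo $H$-equivalence). Thus Theorem~\ref{thm:tannakian_factorization}(iv) applies and $[-]$ is an equivalence. For (ii), the hypothesis is precisely that of Theorem~\ref{thm:tannakian_factorization}(ii) with $\ca{A}=\mathrm{Mot}_H(k)$; the parenthetical interpretation comes from unwinding what the commutativity of the square says on the classes of algebraic cycles, on pushforwards along proper morphisms, and on the tensor products $h(X)\otimes h(Y)$ that generate $\mathrm{Mot}_H(k)$.

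The content of the theorem is concentrated in (iii). The plan is to interpose an auxiliary semi-simple abelian symmetric monoidal category $\ca{M}_k^0$ having the same objects and hom-sets as André's $\ca{M}_k$, but equipped with the untwisted Koszul symmetry inherited from the switch of factors of varieties. André's category $\ca{M}_k$ is then obtained from $\ca{M}_k^0$ by twisting the symmetry via the Künneth projectors, which exist as motivated cycles for classical $H$ in characteristic zero thanks to hard Lefschetz (Deligne in the $\ell$-adic case and Hodge theory for de Rham and Betti cohomology). Consequently the identity-on-objects-and-morphisms functor $\ca{M}_k^0 \to \ca{M}_k$ is strong monoidal but not symmetric. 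Since motivated correspondences extend algebraic ones compatibly with realizations, the natural inclusion defines a symmetric strong monoidal additive functor $F\colon \mathrm{Mot}_H(k) \to \ca{M}_k^0$ together with a symmetric monoidal isomorphism $H' F \cong H$, where $H'$ denotes the Weil cohomology on $\ca{M}_k^0$. A motivated analogue of Jannsen's theorem (which André establishes along the way to proving Tannakianity of $\ca{M}_k$) shows that $\ca{M}_k^0$ is semi-simple abelian, and $H'$ is faithful by construction, so Theorem~\ref{thm:tannakian_factorization}(iv) identifies $\ca{M}_{H'}$ with $\ca{M}_k^0$. Applying Theorem~\ref{thm:tannakian_factorization}(iii) to $F$ then produces a symmetric strong monoidal faithful exact functor $\bar{F}\colon \ca{M}_H(k) \to \ca{M}_{H'} \simeq \ca{M}_k^0$ compatible with the realizations, and composition with the twist $\ca{M}_k^0 \to \ca{M}_k$ yields the desired non-symmetric strong monoidal faithful exact functor commuting with the realizations up to non-symmetric monoidal isomorphism.

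The main obstacle is locating in the literature (or verifying directly) the factorization of André's construction through the symmetric intermediate $\ca{M}_k^0$: one has to check that the untwisted category is genuinely semi-simple abelian and that $H'$ is a faithful symmetric strong monoidal exact functor on it, so that $\ca{M}_k$ really is a symmetry twist of a category to which Theorem~\ref{thm:tannakian_factorization}(iii)--(iv) can be applied. Once this structural input is secured, everything else is a formal assembly of the universal-property machinery supplied by Theorem~\ref{thm:tannakian_factorization}.
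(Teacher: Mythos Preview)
Your proposal is correct and matches the paper's proof essentially line for line: parts (i) and (ii) are handled identically, and for (iii) the paper introduces the same intermediate graded-Tannakian category (denoted $\ca{M}_k^{\mathrm{tw}}$ there, your $\ca{M}_k^0$), applies Theorem~\ref{thm:tannakian_factorization}(iii) to the inclusion $\mathrm{Mot}_H(k)\to\ca{M}_k^{\mathrm{tw}}$, uses Theorem~\ref{thm:tannakian_factorization}(iv) together with Andr\'e's semi-simplicity and faithfulness results to identify $\ca{M}_{\ca{H}}\simeq\ca{M}_k^{\mathrm{tw}}$, and then composes with the non-symmetric twist $\ca{M}_k^{\mathrm{tw}}\to\ca{M}_k$. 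The structural input you flag as the ``main obstacle'' is exactly what the paper dispatches by citing \cite[\S4.3--4.4]{ANDRE}.
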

 
 \begin{proof}
  From Jannsen's theorem \cite[Theorem~1]{JANNSEN}, we know that $\mathrm{Mot_H}(k)$ is semi-simple abelian if $H$-equivalence coincides with numerical equivalence (that is, if conjecture D holds for $H$). The first claim thus follows from Part~(iv) of Theorem~\ref{thm:tannakian_factorization}. 

 Part~(ii) is simply Part~(ii) of Theorem~\ref{thm:tannakian_factorization} for $\ca{A}=\mathrm{Mot}_H(k)$.
  
  It remains to show that Part~(iii) holds. Note that $\ca{M}_k$ is obtained by twisting the signs on a category $\ca{M}^{\mathrm{tw}}_k$ which is graded-Tannakian \cite[\S 4.3]{ANDRE}. There exists an evident triangle of \emph{symmetric} strong monoidal functors
  \[
   \xymatrix@C=5pt{ \mathrm{Mot_H}(k) \ar[rd]_(0.45){H} \ar[rr]^-{h} & \ar@{}[d]|(0.4){\cong}& \ar[ld]^(0.45){\ca{H}} \ca{M}^{\mathrm{tw}}_k  \\ 
 & (\mathbb{Z},\varepsilon)\mbox{-}\Vect_K }
  \]
  which commutes up to symmetric monoidal isomorphism. From the functoriality of the basic Tannakian factorization (Part~(iii) of Theorem~\ref{thm:tannakian_factorization}), we thus get a diagram
  \[
   \xymatrix@C=5pt{ \mathrm{Mot_H}(k) \ar[d]_{[-]}  \ar[rr]^-{h} & & \ca{M}^{\mathrm{tw}}_k \ar[d]^{[-]} \\ 
 \ca{M}_{H}(k) \ar[rd]_(0.45){\bar{H}}  \ar[rr]^-{\bar{h}} && \ca{M}_{\ca{H}}  \ar[ld]^(0.45){\bar{\ca{H}}} \\
 & (\mathbb{Z},\varepsilon)\mbox{-}\Vect_K }  
  \]
  which commutes up to symmetric monoidal isomorphism. Moreover, since the category $\ca{M}^{\mathrm{tw}}_k$ is semi-simple abelian and $\ca{H}$ is faithful (\cite[\S 4.4]{ANDRE}), Part~(iv) of Theorem~\ref{thm:tannakian_factorization} tells us that the right vertical morphism is a symmetric monoidal equivalence. We get the desired functor by composing with its inverse and the (non-symmetric) monoidal isomorphism $\id \colon \ca{M}^{\mathrm{tw}}_k \rightarrow \ca{M}_k$.
 \end{proof}

 \subsection{Homological standard conjectures}\label{section:standard}
 
 The standard conjectures imply various structural properties for the category $\ca{M}_H(k)$. In fact, rather than asking for the existence of algebraic cycles, it suffices to  assume that certain homological cycles exist. To state the conjectures, we need to recall a few basic facts about $\mathrm{Mot}_H(k)$.

 The assignment which sends a smooth projective variety $X$ to $(X,\id,0)$ defines a contravariant symmetric strong monoidal functor $\mathrm{SmProj}_k^{\op} \rightarrow \mathrm{Mot_H}(k)$ such that $H(X,\id,0)\cong H^{\ast} X$. For this reason, we simply write $X$ for $(X,\id,0)$. 
 There is an invertible object $L \in \mathrm{Mot_H}(k)$ called the \emph{Lefschetz motive}. Its image $H(L)$ is a 1-dimensional vector space concentrated in degree $2$. We need the following facts about the category $\mathrm{Mot_H}(k)$:
\begin{enumerate}
 \item[(i)] The objects $X \otimes L^{n}$ where $X$ is an irreducible smooth projective variety and $n \in \mathbb{Z}$ generate $\mathrm{Mot_H}(k)$ up to finite direct sums and summands.
 \item[(ii)] For $X$ irreducible smooth projective of dimension $d$, there is an isomorphism $X^{\vee} \cong X \otimes L^{-d}$. \item[(iii)] Given an ample line bundle $\ca{L}$ on $X$, let $\xi=c_1(\ca{L}) \in H^{2} X$ denote its first Chern class. There is a morphism $\ell \colon X \rightarrow X \otimes L^{-1}$ in $\mathrm{Mot_H}(k)$ such that $H\ell$ induces the linear map $\xi \cdot - \colon H^{\ast}X \rightarrow H^{\ast+2}X$.
\end{enumerate}

 We can adapt Grothendieck's standard conjectures and the variations discussed in \cite{KLEIMAN} from $\mathrm{Mot}_H(k)$ to $\ca{M}_{H}(k)$ by replacing algebraic cycles with homolgical cycles. Since homological cycles are defined relative to a Weil cohomology theory, the resulting conjectures depend on the Weil cohomology theory as well.
 
 Fix an irreducible smooth projective variety $X$ of dimension $d$. The standard conjecture $hC(X,H)$ states that there are homological cycles $\pi_i \colon X \rightsquigarrow X$, $i=0, \ldots, 2d$ such that the $[\pi_i] \colon [X] \rightarrow [X]$ in $\ca{M}_H(k)$ are idempotent and they induce a direct sum decomposition $[X] \cong \oplus_{i=0}^{2d} X_i$ with $\bar{H} X_i$ is concentrated in degree $i$. We write $hC(H)$ for the conjecture that $hC(X,H)$ holds for all irreducible smooth projective varieties $X$.

 
 The standard conjecture $h\nu(X,H)$ (cf.\ \cite[Theorem~2.9]{KLEIMAN}) states that there are homological cycles $\nu_i \colon X \otimes L^{-(d-i)} \rightsquigarrow X$ such that $[\nu_i]\colon [X \otimes L^{-(d-i)}] \rightarrow [X]$ induces an isomorphism $H^{i}X \rightarrow H^{2d-i} X$ in degree $i$ for $0 \leq i <d$ (not necessarily the inverse of $H^{i} \ell^{(d-i)}$). We similarly write $h\nu(H)$ for the conjecture that $h\nu(X,H)$ holds for all $X$.

 Recall that the standard conjecture $B$ asserts that various operators such as $\Lambda \colon H^{\ast}X \Rightarrow H^{\ast} X$ defined using a fixed polarization coming from the ample line bundle $\ca{L}$ are induced by a morphism in $\mathrm{Mot}_H(X)$, that is, an algebraic cycle (see \cite[\S 1.4]{KLEIMAN} for details). The corresponding conjecture $hB(X,H)$ relaxes this to the requirement that these operators are induced by \emph{homological} cycles in $\ca{M}_H(k)$. By definition of the operators, $hB(X,H)$ implies $h\nu(X,H)$.
 
 Kleiman showed in \cite[Corrollary~2.14]{KLEIMAN} that, if the conjecture $B$ holds for all $X$ and both Lefschetz theorems hold for $H$, then $C$ holds universally as well. For the homological counterparts of the conjectures, there is also a converse to this statement. Recall that the \emph{weak Lefschetz theorem} holds for $H$ if for each smooth hyperplane section $j \colon Y \subseteq X$, the corresponding morphism $j^{\ast} \colon X \rightarrow Y$ in $\mathrm{Mot}_H(k)$ induces an isomorphism $H^{i}X \rightarrow H^{i}Y$ if $i \leq d-2$, and the induced morphism is injective if $i=d-1$. The \emph{strong Lefschetz theorem} asserts that the morphism $\ell^{(d-i)} \colon X \rightarrow X \otimes L^{-(d-i)}$ induces an isomorphism $H^{i} X \rightarrow H^{2d-i} X$ for all $i \leq d$.

 The geometric input we need to compare these conjectures is Bertini's theorem: for each $d$-dimensional irreducible smooth projective variety, there exists a smooth hyperplane section $Y \subseteq X$, so $Y$ has irreducible components of dimension $d-1$ (see \cite[\S 6]{JOUANOLOU} for $k$ infinite respectively \cite{POONEN} for $k$ finite; in the latter case, one might need to change the projective embedding to get the desired hyperplane). This allows us to argue by induction on $d$. For the base case, we need the fact that for a one-dimensional irreducible smooth projective variety $X$ (that is, a curve), there is a decomposition $X \cong X_0 \oplus X_1 \oplus X_2$ in $\mathrm{Mot}_H(k)$ such that $H^{\ast}X_i$ is concentrated in degree $i$. This follows from the well-known fact that the standard conjecture $C(X)$ holds for curves, see \cite[\S 10]{MANIN} and \cite[\S 3.3]{SCHOLL} for general ground fields (the splitting is obtained from a rational point over a suitable extension of the base field).
 
 \begin{thm}\label{thm:lefschetz}
 Let $H$ be a Weil cohomology theory for which the weak Lefschetz theorem holds. If $h\nu(H)$ holds universally, then $hC(H)$ holds universally. If the hard Lefschetz theorem also holds for $H$, then the universal conjectures $h\nu(H)$, $hB(H)$, and $hC(H)$ are all equivalent.
 \end{thm}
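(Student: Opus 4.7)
The plan is to adapt Kleiman's classical proof of the corresponding equivalences for algebraic cycles \cite{KLEIMAN} to the present setting of homological cycles. The crucial structural feature that makes this adaptation work is that $\ca{M}_H(k)$ is abelian and admits a faithful exact strong monoidal fiber functor $\bar{H}$ (Theorem~\ref{thm:properties}): kernels, cokernels, and idempotent splittings exist automatically, and $\bar{H}$ detects isomorphisms and computes these constructions on cohomology. Kleiman's maneuvers with algebraic cycles can therefore be replaced by abelian-categorical operations in $\ca{M}_H(k)$, provided one has enough ``building block'' homological morphisms, supplied by $h\nu(H)$ or $hB(H)$.

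For the first statement ($h\nu(H) \Rightarrow hC(H)$ under weak Lefschetz), I would proceed by induction on $d = \dim X$. The base case $d \leq 1$ is supplied by the K\"unneth decomposition of a curve in $\mathrm{Mot}_H(k)$ (Manin, Scholl), which passes through $[-]$ to give the decomposition in $\ca{M}_H(k)$. For the inductive step, fix irreducible $X$ of dimension $d$ and, by Bertini, choose a smooth hyperplane section $j \colon Y \hookrightarrow X$ whose irreducible components have dimension $d-1$. By the inductive hypothesis, the K\"unneth projectors $\pi^Y_i$ exist as morphisms in $\ca{M}_H(k)$. Following Kleiman's strategy, one transports these to $[X]$ through $[j^\ast]$ and a Gysin-type morphism $[j_\ast] \colon [Y] \to [X] \otimes L^{-1}$, using weak Lefschetz to produce projectors $\pi^X_i$ in degrees $i \leq d-1$; the homological morphisms $[\nu_i]$ of $h\nu(H)$ then reflect these projectors across the middle degree, producing the $\pi^X_i$ for $i \geq d+1$; and the residual middle projector is $\pi^X_d \defl \id_{[X]} - \sum_{i \neq d} \pi^X_i$, whose image $X_d$ is automatically a direct summand in the abelian category $\ca{M}_H(k)$. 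Orthogonal idempotence of the family $\{\pi^X_i\}$ and the fact that $\bar{H} X_i$ sits in degree $i$ are verified by applying the faithful exact $\bar{H}$.

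For the equivalence under the additional hypothesis of hard Lefschetz, the implication $hB(H) \Rightarrow h\nu(H)$ is easy: one takes $\nu_i \defl \Lambda^{d-i} \colon X \otimes L^{-(d-i)} \to X$, which is a homological morphism by $hB(H)$ and induces the required isomorphism by hard Lefschetz. The implication $h\nu(H) \Rightarrow hC(H)$ is the first part. Finally, for $hC(H) \Rightarrow hB(H)$ assuming hard Lefschetz: given the decomposition $[X] = \bigoplus X_i$, hard Lefschetz says $\bar{H}$ sends $\ell^{(d-i)} \colon X_i \to X_{2d-i} \otimes L^{-(d-i)}$ to an isomorphism of graded $K$-vector spaces, so by conservativity of $\bar{H}$ (faithful exact on an abelian category) the morphism is already an isomorphism in $\ca{M}_H(k)$. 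Inverting component by component and summing produces $\Lambda$ as an endomorphism of $[X]$ in $\ca{M}_H(k)$; the remaining operators of conjecture $B$ are classical polynomial expressions in $\ell$, $\Lambda$, and the K\"unneth projectors, following \cite{KLEIMAN}.

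The main obstacle is the inductive step in the first part, specifically transferring the K\"unneth projectors at the boundary degree $d-1$ (where weak Lefschetz gives only injectivity of $\bar{H}([j^\ast])$, not surjectivity) and patching the result with the reflected projectors coming from the $[\nu_i]$'s, then checking that the residual middle piece $\pi^X_d$ defined by subtraction actually projects onto the expected degree. Precisely here the abelianness of $\ca{M}_H(k)$, absent from $\mathrm{Mot}_H(k)$, is indispensable: kernels, cokernels, and idempotent splittings are automatic, and conservativity of $\bar{H}$ reduces verification to a routine computation of dimensions in cohomology.
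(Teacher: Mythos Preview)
Your proposal is correct and follows the same overall architecture as the paper: induction on $\dim X$ via Bertini and weak Lefschetz for $h\nu(H)\Rightarrow hC(H)$, and exploitation of conservativity of $\bar{H}$ for $hC(H)\Rightarrow hB(H)$ under hard Lefschetz. Two tactical differences are worth recording.

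For the inductive step, the paper does not transport the projectors $\pi^Y_i$ through $[j^{\ast}]$ and a Gysin map. Instead it takes only the components $q_i \colon [X] \rightarrow Y_i$ of $[j^{\ast}]$ (replacing $q_{d-1}$ by its image factorization, exactly as you anticipate), then \emph{dualizes}: the map $p_i^{\vee} \colon X_i^{\vee} \rightarrow [X]^{\vee}\cong [X\otimes L^{-d}]$ together with $[\nu_i]$ furnishes a morphism $X_i^{\vee}\rightarrow [X]$ whose composite with $p_i$ is an isomorphism (checked on $\bar{H}$), so $p_i$ is split epi and $X_i$ is a summand. One then peels off $X_0,X_{2d}$, then $X_1,X_{2d-1}$, and so on from both ends, leaving $X_d$ as the residual middle piece. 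This avoids any discussion of Gysin morphisms and stays entirely within the rigid abelian structure; your route via $j_{\ast}$ is the more classical Kleiman manoeuvre and also works, but requires a little more bookkeeping with degree shifts.

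For $hC(H)\Rightarrow hB(H)$, the paper does not merely invert $\ell^{(d-i)}\colon X_i \rightarrow X_{2d-i}\otimes[L^{-(d-i)}]$ componentwise. It builds the full primitive decomposition $X_i \cong \bigoplus_{j} L^{j}P^{i-2j}$ inside $\ca{M}_H(k)$ (the $P^{i}$ as kernels of $\ell^{d-i+1}$, the $L^{j}P^{i-2j}$ as images), verified via $\bar{H}$ and hard Lefschetz, and then reads off $\Lambda$, ${}^{c}\Lambda$, $\ast$, $p_j$ directly from this decomposition. Your route---invert the Lefschetz isomorphisms and then cite that the remaining operators are polynomials in these, $\ell$, and the K{\"u}nneth projectors---amounts to establishing the homological analogue of Kleiman's conjecture~$A$ and then invoking his $A\Leftrightarrow B$; this is equally valid but one step less self-contained.
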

 
 \begin{proof}
 We first assume that $h\nu(X,H)$ holds for all $X$. We prove that $hC(X,H)$ holds by induction on the dimension of $X$. The base case is taken care of by the fact that $[X]\cong[X_0]\oplus [X_1] \oplus [X_2]$ if $X$ is a curve.
 
 Thus assume that $\mathrm{dim}(X) \geq 2$. By Bertini's theorem, we can find a smooth hyperplane section $Y \subseteq X$, and there exists a decomposition $[Y] \cong \bigoplus_{i=0}^{2d-2} Y_i$ such that $\bar{H}Y_i$ is concentrated in degree $i$ (by induction assumption and the fact disjoint unions of varieties give direct sums of motives). The inclusion $Y \subseteq X$ thus induces a morphism $q \colon [X] \rightarrow [Y]\cong \bigoplus_{i=0}^{2d-2} Y_i$ with components $q_i \colon [X] \rightarrow Y_i$. Let 
 \[
 \xymatrix{[X] \ar[r]^{q_{d-1}^{\prime}} & Y_{d-1}^{\prime} \ar[r] & Y_{d-1}  }
 \]
 denote the image factorization of $q_{d-1}$ in the abelian category $\ca{M}_H(k)$. The weak Lefschetz theorem implies that $\bar{H}^{i} q_i \colon \bar{H}^{i} [X] \rightarrow \bar{H}^{i} Y_i$ is an isomorphism for $i \leq d-2$ and that $\bar{H}^{d-1} q_{d-1}^{\prime} \colon \bar{H}^{i} [X] \rightarrow \bar{H}^{d-1} Y_{d-1}^{\prime}$ is a monomorphism, hence an isomorphism (recall that $\bar{H}$ is exact). For $i \leq d-2$, let $X_i=Y_i$ and $p_i=q_i$. For $i = d-1$ let $X_{d-1}=Y_{d-1}^{\prime}$ and $p_{d-1}=q_{d-1}^{\prime}$. 
 
 The morphism $p_0^{\vee} \colon X_0^{\vee} \rightarrow [X]^{\vee} \cong [X \otimes L^{-d}]$ induces an isomorphism in degree zero as well. Thus the composite
 \[
 \xymatrix{ X_0^{\vee} \ar[r]^-{p_0^{\vee}} &  [X \otimes L^{-d}] \ar[r]^-{[\nu_0]} & [X] \ar[r]^{p_0} & X_0 }
 \]
induces an isomorphism in degree zero. Since $\bar{H}$ is faithful and the domain and codomain are concentrated in degree zero, it follows that $X_0$ is a direct summand of $[X]$. Taking duals and shifting, we find that there is a corresponding summand $X_{2d}$ and thus a direct sum decomposition $[X] \cong X_0 \oplus X^{\prime} \oplus X_{2d}$. Using the composite of $[\nu_1]$ with the inclusion and projection, we obtain a morphism $X^{\prime} \otimes [L^{-(d-1)}] \rightarrow X^{\prime}$ which induces an isomorphism in degree one. Combining this with $p_1^{\vee} \otimes [L]$, we find that $X_1$ is a direct summand and we get $[X] \cong X_0 \oplus X_1\oplus X^{\prime \prime} \oplus X_{2d-1} \oplus X_d$. Proceeding inductively we are eventually left with $X_d$ in the middle dimension, so $C(X,H)$ holds.

 Now assume that $H$ also satisfies the hard Lefschetz theorem, that is, $H^{i} \ell^{(d-i)}$ is an isomorphism for all $0 \leq i<d$. Since $hB(H)$ implies $h\nu(H)$, it only remains to show that $hC(H)$ implies $hB(H)$. Thus we can assume that $[X] \cong \oplus_{i=0}^{2d-i} X_i$.  In order to check that the required operators are homological, it suffices to observe that the decomposition into primitive parts of $H^{\ast} X$ is the image of a corresponding decomposition in $\ca{M}_H(k)$. To see this, let $P^i \in \ca{M}_H(k)$ be the kernel of the composite
 \[
 \xymatrix{X_i \ar[r] & [X] \ar[r]^-{\ell^{d-i+1}} &[X \otimes L^{-(d-i+1)}] }
 \]
 and let $L^j P^{i-2j}$ be the image of $\ell^j \otimes [L^j] \colon P^{i-2j} \otimes [L^j] \rightarrow X_i$. Then exactness and faithfulness of $\bar{H}$ and the hard Lefschetz theorem imply that $X_i \cong \oplus_{j \geq \mathrm{max}(d-i,0)} L^jP^{i-2j}$. By construction, the image of this isomorphism is (up to canonical isomorphism) the primitive decomposition of $H^{\ast} X$. The operators $\Lambda$, ${}^{c}\Lambda$, $\ast$, and $p_j$ of \cite[\S 1.4]{KLEIMAN} can thus all be defined in the category $\ca{M}_H(k)$, so $hB(H)$ holds.  
 \end{proof}
 
 Note that conjecture $hC(H)$ implies that all objects in $\ca{M}_H(k)$ have a direct sum decomposition whose summands are pure (concentrated in a single degree). Given a family of additive categories $\ca{A}_i$, we write $\bigoplus_{i \in I} \ca{A}_i$ for the full subcategory of of $\prod_{i \in I} \ca{A}_i$ of objects $(A_i)_{i \in I}$ such that $A_i=0$ for all but finitely many $i \in I$.
 
 \begin{prop}\label{prop:pure}
 Suppose that $H$ is a Weil cohomology for which $hC(H)$ holds universally. Write $\ca{M}_i$ for the full subcategory of $\ca{M}_H(k)$ consisting of objects $M$ with $\bar{H}^{j}M=0$ unless $j=i$. Then taking direct sums induces an equivalence $\bigoplus_{i \in \mathbb{Z}} \ca{M}_i \rightarrow \ca{M}_{H}(k)$. 
 \end{prop}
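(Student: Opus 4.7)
The plan is to define the obvious candidate functor
\[
\Phi \colon \bigoplus_{i \in \mathbb{Z}} \ca{M}_i \longrightarrow \ca{M}_H(k), \quad (M_i)_{i \in \mathbb{Z}} \longmapsto \bigoplus_{i \in \mathbb{Z}} M_i,
\]
and to prove that it is fully faithful and essentially surjective. Since the domain is a full subcategory of a product and $\Phi$ manifestly preserves the basic structure, the only substantive content is these two properties.

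Full faithfulness is the easy half. A morphism $A \to B$ between a pure object $A \in \ca{M}_i$ and a pure object $B \in \ca{M}_j$ with $i \neq j$ must be zero: the graded fiber functor $\bar{H}$ sends it to a map between graded vector spaces concentrated in distinct single degrees, which is automatically zero, and $\bar{H}$ is faithful by Theorem~\ref{thm:m}. Hence for families $(A_i)_i$ and $(B_i)_i$ in $\bigoplus \ca{M}_i$, any morphism $\bigoplus A_i \to \bigoplus B_i$ in $\ca{M}_H(k)$ has vanishing off-diagonal components and is therefore automatically in the image of $\Phi$; injectivity on morphisms follows from the usual universal property of biproducts.

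Essential surjectivity is the main step. Let $\ca{N} \subseteq \ca{M}_H(k)$ denote the full subcategory of objects admitting a direct sum decomposition into pure summands. The strategy is to show that $\ca{N}$ contains a generating set and is closed under all the operations that define $\ca{M}_H(k)$; by minimality this forces $\ca{N} = \ca{M}_H(k)$. Closure under finite direct sums is immediate. For duals and tensor products one uses that $\bar{H}$ is symmetric strong monoidal, so $\bar{H}(M^\vee)$ and $\bar{H}(M \otimes N)$ have their gradings controlled by those of $\bar{H}M$ and $\bar{H}N$, putting $M_i^\vee$ in $\ca{M}_{-i}$ and $M_i \otimes N_j$ in $\ca{M}_{i+j}$. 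That $\ca{N}$ contains every $[X]$ for $X$ an irreducible smooth projective variety is precisely the content of $hC(H)$: the idempotents $[\pi_i]$ split $[X]$ as $\bigoplus_{i=0}^{2d} X_i$ with each $X_i \in \ca{M}_i$.

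The main obstacle, and the crucial remaining point, is closure of $\ca{N}$ under kernels and cokernels. Given a morphism $f \colon A \to B$ with $A = \bigoplus_i A_i$ and $B = \bigoplus_j B_j$ pure decompositions, the same degree argument as in the proof of full faithfulness shows that every component $f_{ij} \colon A_i \to B_j$ with $i \neq j$ maps to zero under $\bar{H}$ and hence vanishes. Thus $f = \bigoplus_i f_{ii}$ is block-diagonal, so
\[
\ker(f) \;\cong\; \bigoplus_i \ker(f_{ii}) \quad \text{and} \quad \coker(f) \;\cong\; \bigoplus_i \coker(f_{ii}),
\]
and each of $\ker(f_{ii})$, $\coker(f_{ii})$ lies in $\ca{M}_i$ because $\bar{H}$ is exact and the graded vector spaces $\bar{H}(A_i)$, $\bar{H}(B_i)$ are concentrated in degree $i$. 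This completes the verification that $\ca{N} = \ca{M}_H(k)$, so $\Phi$ is essentially surjective and therefore an equivalence.
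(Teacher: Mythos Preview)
Your proof is correct and follows essentially the same route as the paper: faithfulness of $\bar{H}$ kills cross-degree morphisms (giving full faithfulness and block-diagonality), and then the essential image is shown to be closed under direct sums, tensor products, duals, kernels, and cokernels, hence equal to $\ca{M}_H(k)$ by minimality. The paper's argument is terser but otherwise identical; the one step it makes explicit that you leave implicit is passing from ``$\ca{N}$ contains $[X]$ for each irreducible smooth projective $X$'' to ``$\ca{N}$ contains $[M]$ for every $M \in \mathrm{Mot}_H(k)$'' (this follows from your closure properties, since any Chow motive is a retract of some $X \otimes L^{n}$, and retracts are kernels).
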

 
 \begin{proof}
 Since $\bar{H}$ is faithful, there are no non-zero morphisms between $M \in \ca{M}_i$ and $N \in \ca{M}_j$ unless $i=j$, so the functor is full and faithful and its image is closed under taking kernels and cokernels.  It is clearly closed under direct sums, tensor products, duals, and under tensoring by arbitrary powers of $[L]$. By assumption, the image contains $[X]$ for all irreducible smooth projective varieties $X$, so it contains $[M]$ for all $M \in \mathrm{Mot}_H(k)$. The claim follows from the definition of the basic Tannakian factorization (see Definition~\ref{dfn:tf}).
 \end{proof}
 
\begin{thm}\label{thm:CB}
 If $k$ is algebraic over a finite field and $H$ is {\'e}tale cohomology for a prime different from $p=\mathrm{char}(k)$ or crystalline cohomology, then $hB(H)$ holds universally. In this case, the signs of the symmetry on $\ca{M}_H(k)$ can be twisted and we obtain a genuine Tannakian category $\ca{M}_{\text{{\'e}t}, \ell}(k) \defl \ca{M}_H(k)^{\mathrm{tw}}$ with fiber functor $\bar{H}$ landing in $\Rep(\mathbb{G}_m,\mathbb{Q}_{\ell})$, respectively a Tannakian category $\ca{M}_{\mathrm{crys},p}(k)$. Moreover, all motivated cycles are morphisms in the Tannakian categories $\ca{M}_{\text{{\'e}t}, \ell}(k)$.
\end{thm}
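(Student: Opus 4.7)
The strategy is to deduce $hB(H)$ universally from the Katz--Messing theorem combined with Theorem~\ref{thm:lefschetz}, then twist the Koszul symmetry to obtain a genuine Tannakian category, and finally observe that André's motivated cycles can be assembled from morphisms already present inside $\ca{M}_H(k)$.

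For the first part, both Lefschetz theorems hold in our setting: weak Lefschetz is classical (SGA~4), while hard Lefschetz for $\ell$-adic cohomology over a field algebraic over $\mathbb{F}_p$ is Deligne's theorem from Weil~II, and the crystalline analogue follows through the comparison with $\ell$-adic cohomology. By Theorem~\ref{thm:lefschetz} it therefore suffices to establish $hC(H)$ universally. Katz and Messing, as a corollary of Deligne's proof of the Weil conjectures, prove the stronger standard conjecture $C(X,H)$ for $X$ smooth projective over a finite field $\mathbb{F}_q$: the K\"unneth projectors lift to algebraic correspondences. For $k$ algebraic over $\mathbb{F}_p$ and $X$ smooth projective over $k$, choose a finite subfield $\mathbb{F}_q \subseteq k$ and a model $X_0 / \mathbb{F}_q$ with $X \cong X_0 \times_{\mathbb{F}_q} k$; the algebraic K\"unneth projectors on $X_0$ pull back to algebraic K\"unneth projectors on $X$, establishing $C(X,H)$ and hence \emph{a fortiori} $hC(X,H)$ via $[-] \colon \mathrm{Mot}_H(k) \rightarrow \ca{M}_H(k)$. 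Applying Theorem~\ref{thm:lefschetz} now yields $hB(H)$ universally.

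Once $hB(H)$ is in force, Proposition~\ref{prop:pure} supplies a weight decomposition $\ca{M}_H(k) \simeq \bigoplus_{i \in \mathbb{Z}} \ca{M}_i$. On pure pieces, the Koszul symmetry reduces to the canonical swap times $(-1)^{ij}$ on $\ca{M}_i \otimes \ca{M}_j$, so multiplying the symmetry by $(-1)^{ij}$ on this piece (and extending biadditively, with the usual sign bookkeeping verifying the hexagon) produces a new symmetric monoidal structure on the same underlying rigid abelian category, which we call $\ca{M}_H(k)^{\mathrm{tw}}$. Under the twist, $\bar{H}$ becomes symmetric strong monoidal with target the $\mathbb{Z}$-graded $\mathbb{Q}_{\ell}$-vector spaces equipped with the \emph{trivial} (non-Koszul) symmetry, i.e.\ $\Rep(\mathbb{G}_m, \mathbb{Q}_{\ell})$. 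Since the unit remains simple by Theorem~\ref{thm:m} and $\bar{H}$ is still faithful and exact, Deligne's reconstruction theorem identifies $\ca{M}_{\text{\'et},\ell}(k) \defl \ca{M}_H(k)^{\mathrm{tw}}$ as Tannakian; the crystalline case is parallel.

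For the last claim, recall that André's motivated correspondences are generated by algebraic cycles and the Hodge star operators $\ast_L$ attached to polarizations, closed under pullback, pushforward, composition, and tensor product. Algebraic cycles are morphisms in $\mathrm{Mot}_H(k)$ and thus descend to morphisms in $\ca{M}_H(k)$ via $[-]$. With $hB(H)$ in place, the primitive decomposition of $[X]$ lives inside $\ca{M}_H(k)$, the Lefschetz morphism $\ell^{d-i}$ restricts to an isomorphism on each piece by hard Lefschetz combined with the exactness and faithfulness of $\bar{H}$, and $\ast_L$ is built inside $\ca{M}_H(k)$ from these isomorphisms, their inverses, and the primitive projectors; closure under the remaining operations is automatic. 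Hence every motivated cycle is realised by a morphism in $\ca{M}_{\text{\'et},\ell}(k)$. The main obstacle is the finite-field descent in the first step---making sure the algebraic K\"unneth projectors from $X_0$ actually give the K\"unneth projectors of $X$ after base change---together with the coherence check for the sign twist on compositions of mixed-weight morphisms; both are essentially formal once the weight decomposition is at hand.
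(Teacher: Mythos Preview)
Your proposal is correct and follows essentially the same route as the paper: establish $hC(H)$ via Katz--Messing (with the descent to a finite subfield that you spell out and the paper leaves implicit), invoke Theorem~\ref{thm:lefschetz} together with hard Lefschetz to upgrade to $hB(H)$, use Proposition~\ref{prop:pure} to obtain the weight decomposition and twist the symmetry, and finally observe that the operator $\ast$ (hence every motivated cycle) is built from morphisms already available in $\ca{M}_H(k)$ once $hB(H)$ holds. The only cosmetic difference is that the paper cites \cite[Corollary~1.2]{KATZ_MESSING} directly for crystalline hard Lefschetz rather than appealing to a comparison argument, and it does not explicitly name Deligne's reconstruction theorem for the Tannakian conclusion.
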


\begin{proof}
 Recall that Deligne proved the hard Lefschetz theorem for {\'e}tale cohomology in \cite[Th\'{e}or\`{e}me~4.1.1]{DELIGNE_WEIL2}. Katz and Messing showed how the solution of the Weil conjectures implies the standard conjecture C for the Weil cohomology theories in question over a finite field \cite[Theorem~2]{KATZ_MESSING}, so conjecture hC(H) holds for algebraic extensions of a finite field. The hard Lefschetz theorem holds for crystalline cohomology by \cite[Corollary~1.2)]{KATZ_MESSING}). Thus both $hC(H)$ and $hB(H)$ hold for $\ca{M}_H(k)$ by Theorem~\ref{thm:lefschetz}. 
 
 By Proposition~\ref{prop:pure}, we get a decomposition of the category as a ``direct sum,'' which makes it possible to change the signs of the symmetry isomorphism on both the domain and codomain of $\bar{H}$. Finally, the motivated cycles are obtained by closing algebraic cycles under the operator $\ast$  (see \cite[\S 2]{ANDRE}), so by $hB(H)$, they all lie in the image of the natural isomorphism $\bar{H}[X] \cong H^{\ast}X$.
 \end{proof}

 Note that this gives examples of homological cycles which are not known to be algebraic (the standard conjecture B is not known to hold universally for fields which are algebraic over finite fields). 
 
 The standard conjecture $D$ for $H$ is also equivalent to the statement: the category $\mathrm{Mot}_H(k)$ is semi-simple (see \cite[Theorem~1]{JANNSEN}). Thus the corresponding conjecture $hD(H)$ would be that $\ca{M}_H(k)$ is semi-simple.
  
\begin{prop}
 If $\ca{M}_H(k)$ is semi-simple and the hard Lefschetz theorem holds for $H$, then the conjecture $hC$ holds universally. If there exists an embedding $k \rightarrow \mathbb{C}$ and $H$ is the corresponding Betti cohomology, then $hD(H)$ is equivalent to $hC(H)$, and it holds if and only if every motivated cycle is homological. Moreover, if $hC(H)$ holds for Betti cohomology, then the comparison functor $\ca{M}_H(k) \rightarrow \ca{M}_k$ of Theorem~\ref{thm:properties}~(iii) is an equivalence.
  \end{prop}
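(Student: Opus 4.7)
The proposition comprises four assertions; the plan is to prove the first statement directly, then a key lemma on motivated cycles, and finally to derive Claims~2--4 by comparison with André's category.

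For the first statement, fix an irreducible smooth projective $X$ of dimension $d$ and $0 \leq i < d$. In the semi-simple category $\ca{M}_H(k)$, the Lefschetz morphism $\ell^{d-i} \colon [X] \to [X \otimes L^{-(d-i)}]$ admits a kernel-image factorization in which both factors split off as direct summands: $[X] = K \oplus I$ and $[X \otimes L^{-(d-i)}] = I^{\prime} \oplus C$, with $\ell^{d-i}$ restricting to an isomorphism $I \xrightarrow{\sim} I^{\prime}$. The composite of the projection onto $I^{\prime}$, the inverse of this isomorphism, and the inclusion $I \hookrightarrow [X]$ is a morphism $\nu_i \colon [X \otimes L^{-(d-i)}] \to [X]$ whose image under $\bar{H}$ inverts the hard Lefschetz isomorphism in degree $i$. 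Proposition~\ref{prop:homological_morphism} lifts $\nu_i$ to a homological cycle, so $h\nu(H)$ holds universally; Theorem~\ref{thm:lefschetz} then yields $hC(H)$ (weak Lefschetz being part of the Weil cohomology package for the classical theories).

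The key lemma is that if $hC(H)$ and hard Lefschetz both hold, then every motivated cycle of André is already a morphism in $\ca{M}_H(k)$. Indeed, by Proposition~\ref{prop:pure} we have a pure decomposition $[X] \cong \bigoplus_j X_j$, and since $\bar{H}$ kills any morphism between pure components sitting in different degrees, $\ell^{d-i}$ decomposes as $\bigoplus_j \ell^{d-i}\vert_{X_j}$ with $\ell^{d-i}\vert_{X_j} \colon X_j \to (X \otimes L^{-(d-i)})_j$. The component $\ell^{d-i}\vert_{X_i}$ is sent by $\bar{H}$ to the hard Lefschetz isomorphism, so by faithful-exactness of $\bar{H}$ it is itself an isomorphism in $\ca{M}_H(k)$, whose inverse lifts the Lefschetz $\Lambda$-operator. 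André's operator $\ast$ and, by induction, every motivated cycle of \cite[\S 2]{ANDRE} is then assembled from algebraic correspondences, Künneth projectors, and $\Lambda$ by purely tensor-categorical formulas, placing them all in $\ca{M}_H(k)^{\mathrm{tw}}$.

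Under $hC(H)$ for Betti, the comparison functor $\ca{M}_H(k)^{\mathrm{tw}} \to \ca{M}_k$ from Theorem~\ref{thm:properties}(iii) is symmetric monoidal (the sign-twist being legitimate under $hC$ by Proposition~\ref{prop:pure}), faithful and exact by Deligne (cf.\ Theorem~\ref{thm:universal_property}), full on morphisms between generators $[X]$ (since $\bar{h}(\tilde{\alpha})$ and $\alpha$ share the same image under the faithful Betti realization on $\ca{M}_k$), and essentially surjective since $\ca{M}_k$ is the pseudo-abelian, Tate-inverted hull of motivated correspondences among smooth projectives while $\ca{M}_H(k)^{\mathrm{tw}}$ is closed under images of idempotents and contains all $[X \otimes L^n]$. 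So the comparison is an equivalence. Transporting semi-simplicity of $\ca{M}_k$ (\cite[\S 4.4]{ANDRE}) gives $hC \Rightarrow hD$; the converse is the first statement applied to Betti. Under $hD$, the equivalence identifies motivated cycles with morphisms in $\ca{M}_H(k)$, so every motivated cycle is homological; conversely, if all motivated cycles are homological then the motivated Künneth projectors are, giving $hC$ and hence $hD$. The principal obstacle is the key lemma---specifically, verifying that André's inductive construction of motivated cycles can be replayed inside the abstract rigid abelian tensor category $\ca{M}_H(k)^{\mathrm{tw}}$ equipped only with its Künneth decomposition and the lifted $\Lambda$-operator. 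Since that construction is ultimately tensor-categorical, this should reduce to careful bookkeeping, with particular attention to compatibility with the sign-twist of the symmetry isomorphisms.
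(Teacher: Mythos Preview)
Your argument for the first claim goes via $h\nu(H)$ and then invokes Theorem~\ref{thm:lefschetz}. That theorem, however, has the weak Lefschetz theorem as a standing hypothesis, and the proposition's first statement assumes only semi-simplicity and \emph{hard} Lefschetz for an arbitrary Weil cohomology $H$. Your parenthetical about weak Lefschetz ``being part of the Weil cohomology package for the classical theories'' does not cover the general case, so as written your proof of the first claim establishes a weaker statement than what is asserted. The paper sidesteps this entirely: it never passes through $h\nu$ or Theorem~\ref{thm:lefschetz}, but directly peels off the K\"unneth summands. Concretely, one lets $X_0$ be the image of $\ell^{d}\colon [X]\to[X\otimes L^{-d}]$; since $\bar H[X]$ lives in degrees $\geq 0$ and $\bar H[X\otimes L^{-d}]$ in degrees $\leq 0$, the image is pure of degree $0$, and semi-simplicity splits it off. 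Dualizing and shifting gives $X_{2d}$, and one iterates on the middle piece with $\ell^{d-1}$, etc. This uses only hard Lefschetz and semi-simplicity.

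For the Betti-specific claims your approach is essentially the paper's, just unpacked further. The paper also argues that $hC(H)\Rightarrow hB(H)$ (via Theorem~\ref{thm:lefschetz}, now legitimate since both Lefschetz theorems hold for Betti), hence the Andr\'e operator $\ast$ and all motivated cycles lift to $\ca{M}_H(k)$, whence the comparison functor is an equivalence and semi-simplicity of $\ca{M}_k$ transports back to give $hD(H)$. Your ``key lemma'' reproves the implication $hC\Rightarrow hB$ by hand rather than citing Theorem~\ref{thm:lefschetz}, and you spell out fullness and essential surjectivity of the comparison functor where the paper simply asserts the equivalence; both are fine and arguably clearer. The only substantive issue is the weak Lefschetz detour in the first paragraph.
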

  
  \begin{proof}
  Let $X$ be an irreducible smooth projective variety of dimension $d$. Let $X_0$ be the image of $\ell^{d} \colon [X] \rightarrow [X \otimes L^{-d}]$. Since $\bar{H} [X]$ is concentrated in degrees $\geq 0$ and  $\bar{H}[X \otimes L^{-d}]$ is concentrated in degrees $\leq 0$ we find that $\bar{H} X_0$ is concentrated in degree $0$. Since $\ca{M}_H$ is semi-simple, the epimorphism $[X]\rightarrow X_0$ is split, so $X_0$ is a direct summand of $[X]$. The hard Lefschetz theorem tells us that $\bar{H}[X] \rightarrow \bar{H}X_0$ is an isomorphism in degree zero. Taking duals and shifting, we get a direct sum decomposition $[X] \cong X_0 \oplus X^{\prime} \oplus X_{2d} $ such that $\bar{H} X_i$ is concentrated in degree $i$ and $\bar{H} X^{\prime}$ is concentrated in degrees $0<i<2d$. Applying the same reasoning to $\ell^{-(d-1)} \colon X^{\prime} \rightarrow X^{\prime} \otimes [L^{-(d-1)}]$ we get $X_1$ and $X_{2d-1}$ with a corresponding direct sum decomposition of $X^{\prime}$ and the conclusion follows by iterating.
  
  To see the second claim, note that $hC(H)$ implies $hB(H)$, which, as in the proof of Theorem~\ref{thm:CB}, implies that motivated cycles are homological. Thus the comparison functor $\ca{M}_H(k) \rightarrow \ca{M}_k$ of Theorem~\ref{thm:properties}~(iii) is an equivalence. Since $\ca{M}_k$ is semi-simple (as a consequence of the Hodge Index theorem, see \cite[Proposition~3.3]{ANDRE}), the category $\ca{M}_H(k)$ is semi-simple.
  \end{proof}
  
  We conclude with a brief remark concerning specialization from characteristic zero to characteristic $p > 0$. Let $R$ be a henselian DVR with field of fractions $K$ and residue field $k$, $\ell$ a prime different from $p=\mathrm{char}(k)$, and let $H$ respectively $H^{\prime}$ denote {\'e}tale cohomology with values in $\mathbb{Q}_\ell$-vector spaces for $\mathrm{SmProj}_K$ respectively $\mathrm{SmProj}_k$. Let $\ca{V}$ be the full subcategory of $\mathrm{SmProj}_{K}$ consisting of varieties with good reduction. Then there exists a functor $\mathrm{sp} \colon \mathrm{Mot}_H(\ca{V}) \rightarrow \mathrm{Mot}_{H^{\prime}}(k)$ (where the domain stands for the full subcategory of motives with good reduction) such that the triangle
  \[
  \xymatrix{ \mathrm{Mot}_H(\ca{V}) \ar[rr]^{\mathrm{sp}} \ar[rd]_{H} && \mathrm{Mot}_H(k) \ar[ld]^{H} \\ &(\mathbb{Z},\varepsilon)\mbox{-}\Vect_{\mathbb{Q}_{\ell}}}
  \]
  commutes up to natural isomorphism, see \cite[\S 3]{ANDRE_KAHN}: specialization of cycles is defined in \cite[\S 20.3]{FULTON} and the isomorphism follows from smooth proper base change for {\'e}tale cohomology; compatibility of this isomorphism with Chern classes follows from Grothendieck's \cite[Exp.~X~App.~7.]{SGA6}. An analogous triangle exists for de Rham cohomology and crystalline cohomology (here the compatibility is due to Messing, see \cite[Theorem~B.3.1]{GILLET_MESSING}). From the functoriality of the basic Tannakian factorization, we thus obtain a faithful exact symmetric strong monoidal functor $\bar{\mathrm{sp}} \colon \ca{M}_H(\ca{V}) \rightarrow \ca{M}_{H^{\prime}}(k)$ compatible with the fiber functors.
  
 If we restrict attention to the full subcategory $\ca{M}^{\prime}$ of $\ca{M}_H(\ca{V})$ generated by those $[X]$ for which $hC(X,H)$ holds in the category $\ca{M}_H(\ca{V})$, we get a  tensor functor $\bar{\mathrm{sp}} \colon \ca{M}^{\prime,\mathrm{tw}} \rightarrow \ca{M}_{\text{{\'e}t}, \ell}(k)$ of Tannakian categories.
 
 Andr{\'e} showed that every Hodge cycle on an abelian variety is motivated on a subcategory $\ca{V}^{\prime} \subseteq \mathrm{SmProj}_K$ of smooth projective varieties if this category contains certain abelian fibrations (building on Deligne's proof that all Hodge cycles are absolute Hodge cycles in this case), see \cite[\S 6]{ANDRE}. This now raises the question: can one choose the relevant abelian fibrations (which live over curves) so that they have good reduction, and so that the total space $X$ satisfies $hC(X,H)$ in $\ca{M}_H(\ca{V})$? If this question has an an affirmative answer, then we could conclude from Andr{\'e}'s theorem that every Hodge cycle between abelian varieties of CM-type comes from a morphism in $\ca{M}^{\prime,\mathrm{tw}}$, so we would get a tensor functor from Hodge structures of CM-type to $\ca{M}_{\text{{\'e}t}, \ell}(\bar{\mathbb{F}}_p)$ following \cite[\S 4]{MILNE} without assuming any further conjectures. To circumvent the problem of good reduction to some extent, it might be fruitful to study these questions in the context of mixed motives.

\bibliographystyle{amsalpha}
\bibliography{chow}

\end{document}